\def\l{\left}
\def\r{\right}
\def\bg{\bigg}
\def\({\bg(}
\def\){\bg)}
\def\t{\text}
\def\f{\frac}
\def\eq{\equiv}
\def\d{\mathrm{d}}
\def\N{\mathbb N}
\def\<{\langle}
\def\>{\rangle}
\def\1{{\bf 1}}
\theoremstyle{plain}
\newtheorem{theorem}{Theorem}
\newtheorem{conjecture}{Conjecture}
\newtheorem{lemma}{Lemma}
\newtheorem{corollary}{Corollary}
\theoremstyle{definition}
\newtheorem*{Ack}{Acknowledgment}
\theoremstyle{remark}
\numberwithin{equation}{section}
\begin{document}

\title[On some conjectural supercongruences involving the sequence $t_n(x)$]{On some conjectural supercongruences involving \\ the sequence $t_n(x)$}

\author[Hui-Li Han]{Hui-Li Han}
\address{Department of Applied Mathematics, Nanjing Forestry University, Nanjing 210037, People's Republic of China}
\email{mintcrescent@163.com}

\author[Chen Wang]{Chen Wang$^{\ast}$}
\address{Department of Applied Mathematics, Nanjing Forestry University, Nanjing 210037, People's Republic of China}
\email{cwang@smail.nju.edu.cn}

\begin{abstract}
In this paper, we study some supercongruences involving the sequence
$$
t_n(x)=\sum_{k=0}^n\binom{n}{k}\binom{x}{k}\binom{x+k}{k}2^k
$$
and solve some open problems. For any odd prime $p$  and $p$-adic integer $x$, we determine $\sum_{n=0}^{p-1}t_n(x)^2$ and $\sum_{n=0}^{p-1}(n+1)t_n(x)^2$ modulo $p^2$; for example, we establish that
		\begin{align*}
			\sum_{n=0}^{p-1}t_n(x)^2\equiv\begin{cases}
				\left(\dfrac{-1}{p}\right)\pmod{p^2},&\text{if }2x\equiv-1\pmod{p},\\[8pt]
				(-1)^{\langle x\rangle_p}\dfrac{p+2(x-\langle x\rangle_p)}{2x+1}\pmod{p^2},&\text{otherwise,}
			\end{cases}
		\end{align*}
where $\langle x\rangle_p$ denotes the least nonnegative residue of $x$ modulo $p$. This confirms a conjecture of Z.-W. Sun.
\end{abstract}

\thanks{$^{\ast}$Corresponding author}
\subjclass[2020]{Primary 05A10, 11A07; Secondary 05A19, 11B65, 11B75.}
\keywords{Supercongruence, binomial coefficient, combinatorial identity}

\maketitle

\section{Introduction}

In 2006, during their study of special values of spectral zeta functions, Kimoto and Wakayama \cite{KW} introduced the Ap\'ery-like numbers:
$$
\tilde{J}_2(n):=\sum_{k=0}^n\binom{n}{k}(-1)^k\binom{-1/2}{k}^2\quad (n\in\N=\{0,1,2,\ldots\})
$$
and conjectured that for any odd prime $p$,
\begin{equation}\label{KWconj}
\sum_{n=0}^{p-1}\tilde{J}_2(n)^2\eq \l(\f{-1}{p}\r)\pmod{p^3},
\end{equation}
where $(\f{\cdot}{p})$ stands for the Legendre symbol. This conjecture was confirmed by Long, Osburn and Swisher \cite{LOS} in 2016.

Motivated by the above work, for any $n\in\N$, Z.-W. Sun \cite{SunZW2017} introduced the following polynomial:
$$
S_n(x,y):=\sum_{k=0}^n\binom{n}{k}\binom{x}{k}\binom{-1-x}{k}y^k.
$$
In particular, denote $S_n(x,-1)$ by $s_n(x)$. Clearly, $\tilde{J}_2(n)=s_n(-1/2)$. Sun established several interesting supercongruences involving $s_n(x)$, such as
\begin{equation}\label{suncong}
\sum_{n=0}^{p-1}s_n(x)^2\eq (-1)^{\<x\>_p}\f{p+2(x-\<x\>_p)}{2x+1}\pmod{p^2},
\end{equation}
where $p>3$ is a prime, $x$ is a $p$-adic integer with $x\not\eq-1/2\pmod{p}$ and $\<x\>_p$ denotes the least nonnegative residue of $x$ modulo $p$. Meanwhile, Sun \cite[Conjecture 6.10]{SunZW2017} conjectured that \eqref{suncong} even holds modulo $p^3$, which was confirmed by the second author and Wang \cite{Wang2025}. Moreover, Sun \cite[Conjecture 6.11]{SunZW2017} also conjectured a modulus $p^4$ extension of \eqref{KWconj} and it was later proved by Liu \cite[Theorem 1.2]{Liu2018}.

In this paper, we mainly focus on congruences concerning the sequence
$$S_n(x,-2)=\sum_{k=0}^n\binom{n}{k}\binom{x}{k}\binom{x+k}{k}2^k$$
which was also denoted by $t_n(x)$ in \cite{SunZW2017}. Our first goal is to show the following supercongruence conjectured by Sun \cite[Conjecture 6.14(i)]{SunZW2017}.

\begin{theorem}\label{mainth1}
	For any odd prime and $p$-adic integer $x$, we have
		\begin{align}\label{mainth1eq}
			\sum_{n=0}^{p-1}t_n(x)^2\equiv\begin{cases}
				\left(\dfrac{-1}{p}\right)\pmod{p^2},&\text{if}\ 2x\equiv-1\pmod{p},\\[8pt]
				(-1)^{\langle x\rangle_p}\dfrac{p+2(x-\langle x\rangle_p)}{2x+1}\pmod{p^2},&\text{otherwise.}
			\end{cases}
		\end{align}
\end{theorem}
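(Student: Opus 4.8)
The plan is to expand the square, interchange the order of summation, and reduce the problem to understanding one inner sum. Writing $a_j=\binom{x}{j}\binom{x+j}{j}$ so that $t_n(x)=\sum_{j=0}^n\binom nj a_j2^j$, I obtain
\begin{align*}
\sum_{n=0}^{p-1}t_n(x)^2=\sum_{j,k=0}^{p-1}a_ja_k2^{j+k}S_{j,k},\qquad S_{j,k}:=\sum_{n=0}^{p-1}\binom nj\binom nk.
\end{align*}
The first step is an exact evaluation of $S_{j,k}$. Expanding one factor by Vandermonde, $\binom nj=\sum_i\binom k{j-i}\binom{n-k}i$, and absorbing $\binom nk\binom{n-k}i=\binom n{k+i}\binom{k+i}k$, gives $\binom nj\binom nk=\sum_i\binom k{j-i}\binom{k+i}k\binom n{k+i}$; the hockey-stick identity $\sum_{n=0}^{p-1}\binom nm=\binom p{m+1}$ then yields the closed form $S_{j,k}=\sum_i\binom k{j-i}\binom{k+i}k\binom p{k+i+1}$. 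I reduce this modulo $p^2$ using $\binom p{m+1}\eq(-1)^mp/(m+1)\pmod{p^2}$ for $0\ls m\ls p-2$ together with $\binom pp=1$. The key qualitative consequence is that $S_{j,k}$ is divisible by $p$ unless the boundary term $m=k+i=p-1$ occurs, which requires $j+k\gs p-1$; these boundary terms are the only ones contributing at the unit level.

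Next I analyze the $p$-adic sizes of the weights $a_j$. Setting $r=\<x\>_p$, elementary factor counting shows that for $0\ls j\ls p-1$ the binomial $\binom xj$ is a $p$-adic unit when $j\ls r$ and is divisible by $p$ when $j>r$ (the numerator picks up the factor $x-r\eq0\pmod p$), while $\binom{x+j}j$ is a unit when $j\ls p-1-r$ and divisible by $p$ once $j\gs p-r$. Hence $a_j$ is a unit precisely when $j\ls\min(r,p-1-r)$, and this is the mechanism that forces $\<x\>_p$ and the sign $(-1)^{\<x\>_p}$ into the final answer. Comparing with the boundary condition $j+k\gs p-1$ reveals the structural dichotomy: in the generic case $2x\not\eq-1\pmod p$ one has $j+k\ls2\min(r,p-1-r)<p-1$ whenever both $a_j,a_k$ are units, so there is no unit$\,\times\,$unit contribution and the whole sum is divisible by $p$; only at the self-dual point $r=(p-1)/2$ does the single pair $j=k=(p-1)/2$ simultaneously give units and hit the boundary, producing a genuine unit term.

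In the generic case the entire answer therefore lives at level $p$, fed by two families: pairs with $a_ja_k$ a unit and $S_{j,k}$ carrying one factor $p$, and boundary pairs with $S_{j,k}$ a unit and $a_ja_k$ carrying one factor $p$. Because these terms are multiplied by $p$, the factor $2^{j+k}$ is only needed modulo $p$, so no Fermat quotient intervenes here. The task is to show these contributions telescope to $(-1)^r(p+2(x-r))/(2x+1)$, matching \eqref{suncong}; I expect the companion evaluation of $\sum_{n=0}^{p-1}(n+1)t_n(x)^2$ to be the right intermediary, since the factor $1/(m+1)$ produced by $\binom p{m+1}$ is exactly cleared by the weight $n+1$ (via $(n+1)\binom nm=(m+1)\binom{n+1}{m+1}$), turning the denominators into a manageable identity.

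The exceptional case $2x\eq-1\pmod p$ is where the real difficulty concentrates: the thresholds for $\binom xj$ and $\binom{x+j}j$ coincide, the denominator $2x+1$ vanishes modulo $p$, and a genuine unit term at $j=k=(p-1)/2$ appears, carrying $\binom{p-1}{(p-1)/2}\eq(\f{-1}p)\pmod p$ and a factor $2^{p-1}\eq1+p\,q_p(2)\pmod{p^2}$. The main obstacle will be to show, modulo $p^2$, that all the level-$p$ corrections---including the Fermat-quotient contribution of $2^{p-1}$ and the $O(p)$ parts of $a_{(p-1)/2}$ and of the central binomial---cancel exactly, leaving the clean value $(\f{-1}p)$. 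Exploiting the symmetry $t_n(x)=t_n(-1-x)$, which fixes this point, should organize and force this cancellation.
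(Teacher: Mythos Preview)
Your setup---expanding the square, swapping summations, and using the hockey-stick identity to write $S_{j,k}=\sum_n\binom nj\binom nk$ as a $p$-multiple of a short sum---matches the paper exactly, as does your size analysis of $a_j=\binom xj\binom{x+j}j$ and the resulting nine-block decomposition by whether $j,k$ lie in $[0,m]$, $[m+1,p-1-m]$, or $[p-m,p-1]$. So the architecture is right.

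The genuine gap is that you never supply the engine that evaluates the surviving main block $\sum_{j,k\le m}a_ja_k2^{j+k}S_{j,k}$. The paper's entire proof rests on the exact combinatorial identity
\[
\sum_{k=0}^{m}\sum_{l=0}^{m}\binom mk\binom{m+k}{k}\binom ml\binom{m+l}{l}\frac{(-2)^{k+l}}{(k+l+1)\binom{k+l}{k}}=\frac{(-1)^m}{2m+1},
\]
proved via the Beta-integral representation $\frac{1}{(k+l+1)\binom{k+l}{k}}=\int_0^1 y^k(1-y)^l\,\d y$ and Pfaff's transformation $\sum_k\binom mk\binom{m+k}{k}(-z)^k=(-1)^m\sum_k\binom mk\binom{m+k}{k}(z-1)^k$, which symmetrizes the integrand and reduces it to a partial-fraction sum. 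Your proposal contains nothing playing this role: ``these contributions telescope'' is a hope, not a mechanism, and routing through the $(n+1)$-weighted companion sum goes the wrong way (in the paper that sum is proved \emph{after} Theorem~\ref{mainth1}, by a parallel but harder identity).

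Your reading of the exceptional case $m=(p-1)/2$ is also off. It is not where ``the real difficulty concentrates''; once the identity above is in hand it is the easiest case. Because the identity is exact over $\Q$, multiplying by $p$ gives $p\cdot\frac{(-1)^m}{2m+1}=(-1)^{(p-1)/2}=\big(\frac{-1}{p}\big)$ on the nose, so no separate cancellation of Fermat quotients is needed---they are already absorbed. The only subtlety in justifying the reduction $a_j\to\binom mj\binom{m+j}{j}$ modulo $p^2$ at the unit-level pair $j=k=m$ is that the $O(p)$ correction there is $pt(H_{2m}-H_0)=ptH_{p-1}$, which is $O(p^2)$ by Wolstenholme; you did not notice this, and without it the reduction would genuinely fail by one power of $p$.
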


Sun \cite[Conjecture 6.14(iii)]{SunZW2017} also proposed the following conjecture.

\begin{conjecture}\label{sunconj}
Let $p$ be an odd prime. Then
   \begin{align}
		\sum_{n=0}^{p-1}(8n+5)t_n\left(-\dfrac{1}{2}\right)^2 &\equiv 2p \pmod{p^2},\label{suncon1}\\
		\sum_{n=0}^{p-1}(32n+21)t_n\left(-\dfrac{1}{4}\right)^2 &\equiv 8p \pmod{p^2}.\label{suncon2}
	\end{align}
If $p>3$, then
\begin{align}
		\sum_{n=0}^{p-1}(18n+7)t_n\left(-\dfrac{1}{3}\right)^2 &\equiv 0 \pmod{p^2},\label{suncon3}\\
		\sum_{n=0}^{p-1}(72n+49)t_n\left(-\dfrac{1}{6}\right)^2 &\equiv 18p \pmod{p^2}.\label{suncon4}
\end{align}
\end{conjecture}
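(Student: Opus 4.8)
The plan is to treat all four congruences uniformly as corollaries of two general evaluations modulo $p^2$: the unweighted sum $\sum_{n=0}^{p-1}t_n(x)^2$ supplied by Theorem~\ref{mainth1}, and its weighted companion $\Sigma_1(x):=\sum_{n=0}^{p-1}(n+1)t_n(x)^2$. Each weight splits as $An+B=A(n+1)+(B-A)$, so that
\[
\sum_{n=0}^{p-1}(An+B)t_n(x)^2=A\,\Sigma_1(x)+(B-A)\sum_{n=0}^{p-1}t_n(x)^2;
\]
concretely $8n+5=8(n+1)-3$, $32n+21=32(n+1)-11$, $18n+7=18(n+1)-11$ and $72n+49=72(n+1)-23$. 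Thus once $\Sigma_1(x)$ is known modulo $p^2$, each case of the conjecture reduces to substituting the relevant $x$ and simplifying.

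First I would establish the companion formula for $\Sigma_1(x)$ by mirroring the derivation of Theorem~\ref{mainth1}: writing $t_n(x)=\sum_k\binom nk c_k(x)$ with $c_k(x)=\binom xk\binom{x+k}{k}2^k$, expanding the square and interchanging summation reduces everything to the inner sums $\sum_{n=0}^{p-1}\binom nj\binom nk$ and $\sum_{n=0}^{p-1}(n+1)\binom nj\binom nk$ modulo $p^2$. The identity $(n+1)\binom nk=(k+1)\binom{n+1}{k+1}$ expresses the weighted inner sum through the unweighted one plus an explicit correction, so $\Sigma_1(x)$ inherits the same dichotomy as Theorem~\ref{mainth1} at $2x+1\eq0\pmod p$. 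In particular, at the fixed point $x=-1/2$ of the reflection symmetry $t_n(x)=t_n(-1-x)$, this yields a value of the shape $\Sigma_1(-1/2)\eq\alpha\l(\f{-1}{p}\r)+\beta p\pmod{p^2}$ with explicit rational constants $\alpha,\beta$.

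It remains to specialize. For $x\in\{-1/4,-1/3,-1/6\}$ one checks $2x\not\eq-1\pmod p$, so the generic branch of Theorem~\ref{mainth1} applies with $2x+1=(a-2)/a$ for $x=-1/a$; here the work is to compute $r:=\<x\>_p$ (determined by $ar\eq-1\pmod p$) together with the sign $(-1)^r$, whose values split according to $p$ modulo $4$, $3$ and $6$. Feeding these and the companion value into the displayed combination, the $p$-dependent pieces collapse---using $r\eq x\pmod p$ and the parity of $r$---to the claimed constants $8p$, $0$ and $18p$; the vanishing when $x=-1/3$ reflects that $a=3$ is odd, which forces the sign contributions to cancel. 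The exclusion of $p=3$ for $x=-1/3,-1/6$ is forced because $2x+1$ must be a $p$-adic unit. For $x=-1/2$ we are in the degenerate branch, where $\sum_{n=0}^{p-1}t_n(-1/2)^2\eq\l(\f{-1}{p}\r)$ and $\Sigma_1(-1/2)$ is read off from the special branch above; then $8n+5=8(n+1)-3$ gives $2p$.

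The main obstacle is the companion evaluation $\Sigma_1(x)$, and above all its value at the degenerate point $x=-1/2$: there the generic closed form carries $2x+1$ in a denominator and is unavailable, so one must either carry the linear-in-$p$ term through the special-branch computation directly or justify a controlled limit $x\to-1/2$ in the generic formula. A secondary, purely bookkeeping difficulty is the residue-by-residue determination of $\<x\>_p$ and $(-1)^{\<x\>_p}$ and the verification that the resulting $p$-dependent terms cancel as claimed; this is routine but must be carried out uniformly in the class of $p$.
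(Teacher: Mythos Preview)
Your proposal is correct and follows essentially the same route as the paper: the paper states and proves the companion evaluation $\Sigma_1(x)=\sum_{n=0}^{p-1}(n+1)t_n(x)^2\pmod{p^2}$ as its Theorem~\ref{mainth2} (with the degenerate branch $\Sigma_1(-1/2)\equiv \tfrac{p}{4}+\tfrac{3}{8}\l(\tfrac{-1}{p}\r)$), and then derives all four congruences as Corollary~\ref{sunconm} by exactly the linear split $An+B=A(n+1)+(B-A)$ together with the explicit residues $\<-1/4\>_p,\<-1/3\>_p,\<-1/6\>_p$ case by case. The one place where the paper is more specific than your sketch is the proof of $\Sigma_1(x)$ itself: rather than reducing the weighted inner sum to the unweighted one via $(n+1)\binom nk=(k+1)\binom{n+1}{k+1}$, the paper computes $\sum_{n=0}^{p-1}(n+1)\binom nk\binom nl$ directly and reduces everything to the closed-form double sum $\sum_{k,l}\binom nk\binom{n+k}{k}\binom nl\binom{n+l}{l}(-2)^{k+l}/\binom{k+l+2}{k+1}$, evaluated by a Beta-integral/Pfaff-transformation argument (Lemma~\ref{identity4}) parallel to Lemma~\ref{identity3}.
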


In view of Theorem \ref{mainth1}, it remains to evaluate $\sum_{n=0}^{p-1}nt_n(x)^2$ modulo $p^2$ for $x=-1/2$, $-1/3,-1/4,-1/6$. To archive this goal, we establish the following result.

\begin{theorem}\label{mainth2}
Let $p$ be an odd prime. Then, for any $p$-adic integer $x\not\equiv -1/2\pmod{p}$, we have
		\begin{equation}\label{mainth2_1}
	\sum_{n=0}^{p-1}(n+1)t_n(x)^2\equiv\f{p}{4}-\f{(-1)^{\<x\>_p}(2x^2+2x-1)(p+2(x-\<x\>_p))}{8x+4}\pmod{p^2}.
\end{equation}
Moreover, for any $p$-adic integer $x\equiv -1/2\pmod{p}$, we have
\begin{equation}\label{mainth2_2}
	\sum_{n=0}^{p-1}(n+1)t_n(x)^2\equiv\frac{p}{4}+\frac{3}{8}	\left(\frac{-1}{p}\right)\pmod{p^2}.
\end{equation}
\end{theorem}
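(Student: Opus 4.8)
The plan is to deduce the theorem from Theorem \ref{mainth1} by isolating a single clean relation, valid for every $p$-adic integer $x$:
\begin{equation}\label{starrel}
\sum_{n=0}^{p-1}(n+1)t_n(x)^2\equiv\frac{p}{4}-\frac{2x^2+2x-1}{4}\sum_{n=0}^{p-1}t_n(x)^2\pmod{p^2}.
\end{equation}
Granting \eqref{starrel}, both cases follow immediately. If $2x\not\equiv-1\pmod{p}$, I substitute the second branch of Theorem \ref{mainth1} and simplify $\frac{2x^2+2x-1}{4}\cdot(-1)^{\langle x\rangle_p}\frac{p+2(x-\langle x\rangle_p)}{2x+1}$, which reproduces \eqref{mainth2_1}. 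If $2x\equiv-1\pmod{p}$, then $(2x+1)^2\equiv0\pmod{p^2}$, so $2x^2+2x-1\equiv-\frac{3}{2}\pmod{p^2}$; inserting the first branch $\sum_{n=0}^{p-1}t_n(x)^2\equiv\left(\frac{-1}{p}\right)\pmod{p^2}$ into \eqref{starrel} gives $\frac{p}{4}+\frac{3}{8}\left(\frac{-1}{p}\right)$, which is \eqref{mainth2_2}. Thus everything reduces to proving \eqref{starrel}.

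To prove \eqref{starrel} I would expand both squared sums. Writing $c_k=\binom{x}{k}\binom{x+k}{k}2^k$ and interchanging the order of summation,
\[
\sum_{n=0}^{p-1}(n+1)t_n(x)^2=\sum_{k=0}^{p-1}\sum_{j=0}^{p-1}c_kc_j\sum_{n=0}^{p-1}(n+1)\binom{n}{k}\binom{n}{j},
\]
and likewise for $\sum_{n=0}^{p-1}t_n(x)^2$ with the weight $n+1$ deleted. The inner $n$-sums are evaluated by first linearizing the product through the Vandermonde-type identity $\binom{n}{k}\binom{n}{j}=\sum_i\binom{j}{i}\binom{k+j-i}{j}\binom{n}{k+j-i}$, and then invoking the closed forms $\sum_{n=0}^{p-1}\binom{n}{m}=\binom{p}{m+1}$ and $\sum_{n=0}^{p-1}(n+1)\binom{n}{m}=(m+1)\binom{p+1}{m+2}=\frac{(m+1)(p+1)}{m+2}\binom{p}{m+1}$. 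The key structural point is that, term by term in $m=k+j-i$, the weighted inner sum differs from the unweighted one only by the explicit rational factor $\frac{(m+1)(p+1)}{m+2}$, so both sides of \eqref{starrel} become triple sums in $k,j,i$ against $\binom{p}{m+1}$ with fully explicit coefficients.

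The reduction modulo $p^2$ then rests on $\binom{p}{m+1}\equiv\frac{p(-1)^m}{m+1}\pmod{p^2}$ for $0\le m\le p-2$, together with the exact boundary values $\binom{p}{p-1}=p$ and $\binom{p}{p}=1$, which supply the $O(1)$ contributions. Feeding these in, the problem collapses to evaluating explicit convolutions of sums of the form $\sum_k c_k\cdot(\text{rational function of }k)$; here one exploits that $\binom{x}{k}\equiv0\pmod{p}$ for $\langle x\rangle_p<k\le p-1$, which truncates the effective range and brings the residue $\langle x\rangle_p$ into play. Matching the resulting $O(p)$ bulk and $O(1)$ boundary terms against the right-hand side of Theorem \ref{mainth1} is what should produce the coefficient $\frac{2x^2+2x-1}{4}$ and the free term $\frac{p}{4}$ in \eqref{starrel}.

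I expect this last matching to be the main obstacle: it is a genuine $p$-adic bookkeeping problem whose delicate pieces are the boundary indices $m\in\{p-2,p-1\}$ and the interaction between the vanishing of $\binom{x}{k}$ past $k=\langle x\rangle_p$ and the denominators $\frac{1}{m+1}$, $\frac{1}{m+2}$. A cleaner alternative worth attempting, which would sidestep most of the bookkeeping, is to use the three-term recurrence in $n$ satisfied by $t_n(x)$: Abel/telescoping summation of $t_nt_{n+1}$ and $t_n^2$ against that recurrence should yield \eqref{starrel} directly, with the boundary data furnished by $t_{p-1}(x)$ and $t_p(x)$ modulo $p^2$. This is likely the shortest route once the recurrence coefficients have been recorded, and it would also explain transparently why the coefficient $2x^2+2x-1$ (which is $t_1(x)-2$) appears.
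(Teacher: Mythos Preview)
Your relation \eqref{starrel} is correct and, once established, does reduce Theorem~\ref{mainth2} to Theorem~\ref{mainth1} exactly as you say. The gap is that you never prove \eqref{starrel}: both routes you sketch stop before the decisive step.

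In your first route you expand each side, linearize $\binom{n}{k}\binom{n}{j}$, and reduce $\binom{p}{m+1}\equiv\frac{(-1)^m p}{m+1}\pmod{p^2}$. That is precisely the paper's set-up, and after it the dominant contribution to the left side of \eqref{starrel} is $p$ times the double sum
\[
\sum_{k=0}^{m}\sum_{l=0}^{m}\binom{m}{k}\binom{m+k}{k}\binom{m}{l}\binom{m+l}{l}\dfrac{(-2)^{k+l}}{\binom{k+l+2}{k+1}}
\qquad (m=\langle x\rangle_p),
\]
whose closed form $\tfrac14-\tfrac{(-1)^m(2m^2+2m-1)}{8m+4}$ is exactly what produces both the free term $p/4$ and the coefficient $(2x^2+2x-1)/4$ in \eqref{starrel}. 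This identity is the paper's Lemma~\ref{identity4}, and its proof (Beta-integral representation, Pfaff's transformation \eqref{pfaff}, differentiation, and the partial-fraction expansion \eqref{pfd}) is the entire technical core of the section. Nothing in your outline replaces it; the phrase ``matching the resulting $O(p)$ bulk and $O(1)$ boundary terms against the right-hand side of Theorem~\ref{mainth1}'' \emph{is} that lemma, not a way around it. You also need a separate boundary computation for the range $m<l\le p-1-m$ (the paper's Lemma~\ref{sigma2_2}) to pick up the factor $p+2(x-\langle x\rangle_p)$, and your sketch does not isolate those terms.

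Your second route via a three-term recurrence for $t_n(x)$ is a genuinely different idea and could in principle bypass Lemma~\ref{identity4}, but as written it is only a hope: you do not record the recurrence, do not carry out the Abel summation, and do not evaluate $t_{p-1}(x)$ and $t_p(x)$ modulo $p^2$. Until one of the two routes is actually executed, the proposal has the right architecture but is missing its load-bearing computation.
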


\begin{corollary}\label{sunconm}
Conjecture \ref{sunconj} is true.
\end{corollary}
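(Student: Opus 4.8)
The plan is to derive all four congruences in Conjecture~\ref{sunconj} from Theorems~\ref{mainth1} and \ref{mainth2} by linearity. For any integers $a,b$ and any $p$-adic integer $x$ one has the trivial splitting
\begin{equation*}
\sum_{n=0}^{p-1}(an+b)t_n(x)^2=a\sum_{n=0}^{p-1}(n+1)t_n(x)^2+(b-a)\sum_{n=0}^{p-1}t_n(x)^2,
\end{equation*}
and I would apply it with $(a,b,x)$ taken to be $(8,5,-1/2)$, $(32,21,-1/4)$, $(18,7,-1/3)$ and $(72,49,-1/6)$, corresponding to \eqref{suncon1}--\eqref{suncon4}. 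Here $x=-1/2$ falls under the singular case $2x\equiv-1\pmod p$ of the two theorems, while for $x=-1/4,-1/3,-1/6$ one checks at once that $x\not\equiv-1/2\pmod p$ (for $p>3$ in the latter two cases), so the ``otherwise'' branches apply.

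For $x=-1/2$ I would simply insert the singular-case values $\sum_{n=0}^{p-1}t_n(-1/2)^2\equiv\left(\frac{-1}{p}\right)$ from Theorem~\ref{mainth1} and $\sum_{n=0}^{p-1}(n+1)t_n(-1/2)^2\equiv\frac{p}{4}+\frac{3}{8}\left(\frac{-1}{p}\right)$ from \eqref{mainth2_2}; the two Legendre-symbol contributions cancel and leave exactly $2p$, giving \eqref{suncon1}. For the remaining values it is convenient to record the ``otherwise'' master formula obtained by feeding \eqref{mainth2_1} and Theorem~\ref{mainth1} into the splitting, namely
\begin{equation*}
\sum_{n=0}^{p-1}(an+b)t_n(x)^2\equiv\frac{ap}{4}+\frac{(-1)^{\langle x\rangle_p}\bigl(p+2(x-\langle x\rangle_p)\bigr)}{2x+1}\Bigl(b-a-\frac{a(2x^2+2x-1)}{4}\Bigr)\pmod{p^2}.
\end{equation*}
The key algebraic observation is that the bracketed coefficient equals $\tfrac14\bigl(4b-3a-2ax^2-2ax\bigr)$, which vanishes identically for $(a,b,x)=(32,21,-1/4)$ and $(a,b,x)=(72,49,-1/6)$. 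Hence for these two the residue-dependent term disappears and the sum reduces to $ap/4$, which is $8p$ and $18p$ respectively; this proves \eqref{suncon2} and \eqref{suncon4} with no need to evaluate $\langle x\rangle_p$.

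The case $x=-1/3$ is the only delicate one, since there the bracketed coefficient equals $-9/2\neq0$. Writing $r=\langle-1/3\rangle_p$ and using $2x+1=1/3$, the master formula collapses to
\begin{equation*}
\sum_{n=0}^{p-1}(18n+7)t_n(-1/3)^2\equiv\frac{9}{2}\Bigl(p-3(-1)^{r}\bigl(p+2(-\tfrac13-r)\bigr)\Bigr)\pmod{p^2},
\end{equation*}
so everything hinges on showing $3(-1)^{r}\bigl(p+2(-1/3-r)\bigr)=p$. I would verify this by splitting on $p\bmod 3$: when $p\equiv1\pmod3$ one finds $r=(p-1)/3$, which is even (as then $p\equiv1\pmod6$) and gives $-1/3-r=-p/3$; when $p\equiv2\pmod3$ one finds $r=(2p-1)/3$, which is odd and gives $-1/3-r=-2p/3$. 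In either case a short computation yields $3(-1)^{r}\bigl(p+2(-1/3-r)\bigr)=p$ exactly, so the displayed expression is $\frac{9}{2}(p-p)=0$ and \eqref{suncon3} follows. The main (and essentially only) obstacle is this last step: one must pin down both the parity of $\langle-1/3\rangle_p$ and the exact $p$-adic value of $p+2(-1/3-\langle-1/3\rangle_p)$, since the assertion is modulo $p^2$ and the naive leading term $ap/4$ does not by itself already produce the conjectured right-hand side.
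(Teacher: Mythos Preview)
Your argument is correct and follows the same overall strategy as the paper: combine Theorems~\ref{mainth1} and \ref{mainth2} via the linear splitting $an+b=a(n+1)+(b-a)$ and simplify. The paper's proof simply lists the explicit values of $\langle-1/4\rangle_p$, $\langle-1/3\rangle_p$, $\langle-1/6\rangle_p$ according to the residue class of $p$ and then declares the verification of \eqref{suncon2}--\eqref{suncon4} routine.

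Your treatment is a mild refinement: by isolating the factor $4b-3a-2ax^2-2ax$ you observe that it vanishes for $(a,b,x)=(32,21,-1/4)$ and $(72,49,-1/6)$, so \eqref{suncon2} and \eqref{suncon4} follow without ever computing $\langle x\rangle_p$ or splitting on $p\bmod4$ or $p\bmod6$. This is tidier than the paper's case check, and it explains \emph{why} the specific linear combinations $32n+21$ and $72n+49$ are the ``right'' ones for those $x$. For $x=-1/3$ the coefficient does not vanish and you are forced into exactly the residue analysis the paper alludes to; your handling of that case (parity of $(p-1)/3$ versus $(2p-1)/3$ and the exact evaluation of $p+2(x-\langle x\rangle_p)$) is correct.
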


For more congruence properties of the polynomial sequence $S_n(x,y)$, one may consult \cite{Guo,Liu2017,SunZH2022,WangZhong}. We shall prove Theorem \ref{mainth1} in the next section. The proofs of Theorem \ref{mainth2} and Corollary \ref{sunconm} will be given in Section 3. Interestingly, in order to prove the two theorems, we find closed formulas for the following double sums:
$$
\sum_{k=0}^n\sum_{l=0}^{n}\binom{n}{k}\binom{n+k}{k}\binom{n}{l}\binom{n+l}{l}\dfrac{(-2)^{k+l}}{(k+l+1)\binom{k+l}{k}}
$$
and
$$
\sum_{k=0}^n\sum_{l=0}^{n}\binom{n}{k}\binom{n+k}{k}\binom{n}{l}\binom{n+l}{l}\dfrac{(-2)^{k+l}}{\binom{k+l+2}{k+1}},
$$
which are also important in their own rights.

\section{Proof of Theorem \ref{mainth1}}

Let $p$ be an odd prime. In the following contents, for any $p$-adic integer $x$, we always use $m$ to denote $\<x\>_p$ and write $x=m+pt$, where $t$ is a $p$-adic integer. In order to prove Theorem \ref{mainth1}, we need the following lemmas. The first one concerns the $p$-adic expansion of $\binom{x}{k}\binom{x+k}{k}$ and plays a key role in the subsequent proofs.
\begin{lemma}[Wang and Wang {\cite[Lemma 2.1]{Wang2025}}]\label{xkx}
For any odd prime $p$ and $p$-adic integer $x$ with $m\leq(p-1)/2$, we have
\begin{align*}
\binom{x}{k}\binom{x+k}{k}\eq\begin{cases}\binom{m}{k}\binom{m+k}{k}(1+ptH_{m+k}-ptH_{m-k})\pmod{p^2},\quad&\t{if}\ 0\leq k\leq m,\vspace{8pt}\\ 0\pmod{p^2},\quad&\t{if}\ p-m\leq k\leq p-1,\end{cases}
\end{align*}
where $H_n:=\sum_{k=1}^n1/k$ stands for harmonic number.

Moreover, if $m<(p-1)/2$ and $m+1\leq k\leq p-1-m$, then we have
\begin{align*}
\binom{x}{k}\binom{x+k}{k}\eq\f{(-1)^{m+k+1}pt\binom{m+k}{k}}{(k-m)\binom{k}{m}}\pmod{p^2}.
\end{align*}
\end{lemma}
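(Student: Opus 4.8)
The plan is to substitute $x=m+pt$ into the factored forms
\[
\binom{x}{k}=\frac{1}{k!}\prod_{j=0}^{k-1}\bigl((m-j)+pt\bigr),\qquad
\binom{x+k}{k}=\frac{1}{k!}\prod_{j=1}^{k}\bigl((m+j)+pt\bigr),
\]
and to read off each linear factor modulo $p^2$. The governing principle is simple: a factor $(a+pt)$ with $a\not\equiv0\pmod p$ is a $p$-adic unit with first-order expansion $a\bigl(1+pt\,a^{-1}\bigr)\pmod{p^2}$, while a factor with $a\equiv0\pmod p$ contributes exactly one power of $p$. Because $k\le p-1$, the denominators $k!$ are units, so the whole computation reduces to counting, in each index range, how many factors of the two products are divisible by $p$, and then expanding the remaining unit factors to first order. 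The hypothesis $m\le(p-1)/2$ is what pins down these counts, by forcing the relevant roots $j\equiv m$ and $j\equiv -m\pmod p$ into or out of the ranges in question.

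For $0\le k\le m$ I would first check that no factor vanishes modulo $p$: here $m-j$ runs over $1,\dots,m$ and $m+j$ runs over $m+1,\dots,m+k\le 2m\le p-1$. Expanding both products to first order and using $\sum_{j=0}^{k-1}(m-j)^{-1}=H_m-H_{m-k}$ and $\sum_{j=1}^{k}(m+j)^{-1}=H_{m+k}-H_m$ gives
\[
\binom{x}{k}\equiv\binom{m}{k}\bigl(1+pt(H_m-H_{m-k})\bigr),\qquad
\binom{x+k}{k}\equiv\binom{m+k}{k}\bigl(1+pt(H_{m+k}-H_m)\bigr)\pmod{p^2}.
\]
Multiplying and dropping the $p^2t^2$ cross term, the two harmonic corrections telescope to $H_{m+k}-H_{m-k}$, which is the first case.

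For $p-m\le k\le p-1$ I would argue that each product contributes one power of $p$: the first product contains the single factor $pt$ (at $j=m$, the next root $j=m+p$ lying beyond $k-1\le p-2$), and the second contains the single factor $p(1+t)$ (at $j=p-m$), so both binomials are $\equiv0\pmod p$ and their product is $\equiv0\pmod{p^2}$. For the ``moreover'' case $m+1\le k\le p-1-m$, the same count shows the first product still has exactly the one $p$-divisible factor $pt$ at $j=m$, whereas the second product has none, since its only root $j=p-m$ now lies beyond $k\le p-1-m$. Hence
\[
\binom{x}{k}\equiv\frac{pt}{k!}\prod_{\substack{0\le j\le k-1\\ j\ne m}}(m-j)\pmod{p^2},\qquad
\binom{x+k}{k}\equiv\binom{m+k}{k}\pmod p,
\]
where in the first congruence the surviving unit factors may be reduced modulo $p$ thanks to the prefactor $pt$. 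Writing $\prod_{j\ne m}(m-j)=m!\,(-1)^{k-1-m}(k-1-m)!$, substituting the factorial forms of the binomials, and using $(-1)^{k-1-m}=(-1)^{m+k+1}$ should collapse the product to the claimed $(-1)^{m+k+1}pt\binom{m+k}{k}\big/\bigl((k-m)\binom{k}{m}\bigr)$.

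I expect the only real obstacle to be the bookkeeping in the two divisibility-counting steps: one must verify carefully that in each of the three ranges the two products meet the roots $j\equiv\pm m\pmod p$ with exactly the stated multiplicities, which is precisely where $m\le(p-1)/2$ (and, in the last case, $m<(p-1)/2$) is needed. The final factorial-to-binomial simplification together with the sign reconciliation is routine but must be carried out with care.
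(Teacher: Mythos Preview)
Your proposal is correct. Note, however, that the paper does not give its own proof of this lemma: it is quoted verbatim from Wang and Wang~\cite[Lemma~2.1]{Wang2025} and used as a black box, so there is no in-paper argument to compare against. What you outline is precisely the natural (and presumably the original) proof: expand each linear factor $(m\pm j)+pt$ to first order in $p$, count in each range how many of the $2k$ factors are divisible by $p$ (zero, one, or two, governed by whether $j=m$ lies in $[0,k-1]$ and whether $j=p-m$ lies in $[1,k]$), and simplify. Your bookkeeping is accurate in all three ranges, the harmonic-number telescoping in the first case is right, and the factorial identity
\[
\prod_{\substack{0\le j\le k-1\\ j\ne m}}(m-j)=m!\,(-1)^{k-1-m}(k-1-m)!=\frac{(-1)^{m+k+1}k!}{(k-m)\binom{k}{m}}
\]
gives exactly the claimed closed form in the middle range. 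One cosmetic point: in the first case your phrase ``$m-j$ runs over $1,\dots,m$'' should read ``$m-j$ runs over $m-k+1,\dots,m\subseteq\{1,\dots,m\}$'', but this does not affect the argument.
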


\begin{lemma}\label{identity3}
	For any nonnegative integer $n$, we have
	\begin{align}\label{23}		\sum_{k=0}^n\sum_{l=0}^{n}\binom{n}{k}\binom{n+k}{k}\binom{n}{l}\binom{n+l}{l}\dfrac{(-2)^{k+l}}{(k+l+1)\binom{k+l}{k}}=\dfrac{(-1)^n}{2n+1}.
	\end{align}
\end{lemma}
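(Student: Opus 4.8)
The plan is to read the coupling weight as a Beta integral so as to decouple the two summation indices. Since
\[
\frac{1}{(k+l+1)\binom{k+l}{k}}=\frac{k!\,l!}{(k+l+1)!}=\int_0^1\xi^k(1-\xi)^l\,\d\xi,
\]
substituting this into the left-hand side of \eqref{23} and interchanging the (finite) sum with the integral turns it into
\[
\int_0^1\left(\sum_{k=0}^n\binom{n}{k}\binom{n+k}{k}(-2\xi)^k\right)\left(\sum_{l=0}^n\binom{n}{l}\binom{n+l}{l}(-2(1-\xi))^l\right)\d\xi.
\]
The advantage of this reshaping is that each inner sum is now a single-variable polynomial with a known closed form: by the classical hypergeometric expression for the Legendre polynomial, $\sum_{k=0}^n\binom{n}{k}\binom{n+k}{k}z^k=P_n(1+2z)$, so the two factors are $P_n(1-4\xi)$ and $P_n(4\xi-3)$. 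Hence the double sum equals $\int_0^1 P_n(1-4\xi)P_n(4\xi-3)\,\d\xi$.

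Next I would normalise this integral. Using $P_n(-u)=(-1)^nP_n(u)$ together with the substitution $u=4\xi-1$ (which sends the two shifted arguments to $u$ and $u-2$), the integral becomes $\frac{(-1)^n}{4}\int_{-1}^{3}P_n(u)P_n(u-2)\,\d u$. Thus the whole identity \eqref{23} reduces to the single clean evaluation
\[
\int_{-1}^{3}P_n(u)P_n(u-2)\,\d u=\frac{4}{2n+1}.
\]
To establish this I would split the range at the overlap point $u=1$ as $\int_{-1}^1+\int_1^3$, and rewrite the second piece via $u=s+2$ as $\int_{-1}^1 P_n(s+2)P_n(s)\,\d s$. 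Each resulting integral is now over the canonical interval $[-1,1]$ and pairs $P_n$ against a shifted copy $P_n(\cdot\pm2)$, which is still a degree-$n$ polynomial. Expanding $P_n(u-2)=\sum_{m=0}^{n}c_m P_m(u)$ in the Legendre basis and applying orthogonality $\int_{-1}^1 P_nP_m\,\d u=\frac{2}{2n+1}\delta_{nm}$, only the top term survives; and because shifting the argument does not alter the leading coefficient, $c_n=1$. So each of the two pieces equals exactly $\frac{2}{2n+1}$, their sum is $\frac{4}{2n+1}$, and the claimed value $\frac{(-1)^n}{2n+1}$ follows.

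The genuinely delicate point is this final evaluation. A first instinct is to invoke Legendre orthogonality at once, but the two factors $P_n(u)$ and $P_n(u-2)$ carry \emph{different} arguments, so they are not orthogonal in the naive sense and a single application of orthogonality is unavailable. The resolution — splitting the range at $u=1$ so that each half reduces to the standard interval $[-1,1]$, and then exploiting that the argument shift preserves the leading coefficient so that the diagonal term enters with coefficient exactly $1$ — is the crux of the argument. Everything else (the Beta-integral rewriting, the identification of the inner sums as $P_n$, and the linear substitution) is routine once this is in hand; the only bookkeeping I would double-check is the correct normalisation of $\sum_k\binom{n}{k}\binom{n+k}{k}z^k=P_n(1+2z)$, which is a standard fact verifiable by comparing leading coefficients or by a quick induction.
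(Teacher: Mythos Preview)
Your proof is correct. Both you and the paper open with the same key idea --- rewriting $\dfrac{1}{(k+l+1)\binom{k+l}{k}}$ as a Beta integral $\int_0^1\xi^k(1-\xi)^l\,\d\xi$ to decouple the two sums --- but the finishes differ. The paper applies Pfaff's transformation (equivalently the parity relation $P_n(-u)=(-1)^nP_n(u)$) to center both factors around $2\xi-1$, integrates the resulting monomial $(2\xi-1)^{k+l}$ explicitly, and then evaluates the surviving double sum via the partial-fraction identity $\sum_{l=0}^n\frac{(-1)^l}{x+l}\binom{n+l}{l}\binom{n}{l}=\frac{(-x+1)_n}{(x)_{n+1}}$. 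You instead recognise each inner sum as a shifted Legendre polynomial, reduce to $\frac{(-1)^n}{4}\int_{-1}^3 P_n(u)P_n(u-2)\,\d u$, and finish by splitting at $u=1$ and using Legendre orthogonality on $[-1,1]$ together with the observation that an argument shift preserves the leading coefficient. Your route is more conceptual and avoids the partial-fraction computation; the paper's route stays entirely within elementary binomial identities and thereby dovetails with the machinery reused in Lemma~\ref{identity4}. Your ``delicate point'' --- that $P_n(u)$ and $P_n(u-2)$ are not directly orthogonal --- is handled correctly by the split-and-shift argument, and the leading-coefficient observation is exactly what is needed to pick out the diagonal term with coefficient $1$.
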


\begin{proof}
Note that
$$
\f{1}{(k+l+1)\binom{k+l}{k}}=\f{\Gamma(k+1)\Gamma(l+1)}{\Gamma(k+l+2)}=B(k+1,l+1)=\int_0^1 x^k(1-x)^l\d x,
$$
where $\Gamma$ is the Gamma function and $B$ is the Beta function. Therefore, the left-hand side of \eqref{23} becomes
\begin{align*}
&\sum_{k=0}^n\sum_{l=0}^{n}\binom{n}{k}\binom{n+k}{k}\binom{n}{l}\binom{n+l}{l}\int_0^1(-2x)^k(2x-2)^l\d x\\
&\quad=\int_0^1\l(\sum_{k=0}^n\binom{n}{k}\binom{n+k}{k}(-2x)^k\sum_{l=0}^{n}\binom{n}{l}\binom{n+l}{l}(2x-2)^l\r)\d x.
\end{align*}
By Pfaff's transformation (cf. \cite[p. 79, (2.3.14)]{AAR99}), we have
\begin{equation}\label{pfaff}
\sum_{k=0}^n\binom{n}{k}\binom{n+k}{k}(-z)^k=(-1)^n\sum_{k=0}^n\binom{n}{k}\binom{n+k}{k}(z-1)^k.
\end{equation}
It follows that
\begin{align*}
&\sum_{k=0}^n\binom{n}{k}\binom{n+k}{k}(-2x)^k\sum_{l=0}^{n}\binom{n}{l}\binom{n+l}{l}(2x-2)^l\\
&\quad=\sum_{k=0}^n\binom{n}{k}\binom{n+k}{k}(2x-1)^k\sum_{l=0}^n\binom{n}{l}\binom{n+l}{l}(1-2x)^l\\
&\quad=\sum_{k=0}^n\sum_{l=0}^n\binom{n}{k}\binom{n+k}{k}\binom{n}{l}\binom{n+l}{l}(-1)^l(2x-1)^{k+l}.
\end{align*}
Since
$$
\int_0^1 (2x-1)^{k+l}\d x=\f{1+(-1)^{k+l}}{2(k+l+1)},
$$
we can further transform the left-hand side of \eqref{23} into
\begin{equation}\label{Lem2.2key}
\sum_{k=0}^n\sum_{l=0}^n\binom{n}{k}\binom{n+k}{k}\binom{n}{l}\binom{n+l}{l}\f{(-1)^k+(-1)^l}{2(k+l+1)}.
\end{equation}
Letting $x=k+1$ in the following partial fraction decomposition:
\begin{equation}\label{pfd}
\sum_{l=0}^n\f{(-1)^l}{x+l}\binom{n+l}{l}\binom{n}{l}=\f{(-x+1)_n}{(x)_{n+1}},
\end{equation}
we find that
$$
\sum_{l=0}^n\f{(-1)^l}{k+l+1}\binom{n+l}{l}\binom{n}{l}=\f{(-k)_n}{(k+1)_{n+1}}=\begin{cases}0,\quad &\t{if}\ k<n,\\ \f{(-1)^n}{(2n+1)\binom{2n}{n}},\quad&\t{if}\ k=n, \end{cases}
$$
where $(x)_k=x(x+1)\cdots(x+k-1)$ stands for the Pochhammer symbol. Therefore,
\begin{equation}\label{Lem2.2key'}
\sum_{k=0}^n\sum_{l=0}^n\binom{n}{k}\binom{n+k}{k}\binom{n}{l}\binom{n+l}{l}\f{(-1)^l}{k+l+1}=\f{(-1)^n}{2n+1}.
\end{equation}
Similarly, we have
\begin{equation}\label{Lem2.2key''}
\sum_{k=0}^n\sum_{l=0}^n\binom{n}{k}\binom{n+k}{k}\binom{n}{l}\binom{n+l}{l}\f{(-1)^k}{k+l+1}=\f{(-1)^n}{2n+1}.
\end{equation}
Substituting \eqref{Lem2.2key'} and \eqref{Lem2.2key''} into \eqref{Lem2.2key}, we finally arrive at the desired result.
\end{proof}

\begin{lemma}\label{sigma1_1}
For any odd prime $p$ and $p$-adic integer $x$ with $m<(p-1)/2$, we have
\begin{align*}
&p\sum_{k=0}^m\sum_{l=0}^m\binom{x}{k}\binom{x+k}{k}\binom{x}{l}\binom{x+l}{l}2^{k+l}\sum_{n=0}^{k+l}\f{1}{n+1}\binom{n}{l}\binom{l}{n-k}\binom{p-1}{n}\\
&\quad\eq \f{p(-1)^m}{2m+1}\pmod{p^2}.
\end{align*}
\end{lemma}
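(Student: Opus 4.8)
The plan is to exploit that the asserted quantity already carries an explicit factor $p$, so the congruence modulo $p^2$ is equivalent to the corresponding congruence modulo $p$ for the double sum itself. Write
\begin{align*}
A:=\sum_{k=0}^m\sum_{l=0}^m\binom{x}{k}\binom{x+k}{k}\binom{x}{l}\binom{x+l}{l}2^{k+l}\sum_{n=0}^{k+l}\f1{n+1}\binom{n}{l}\binom{l}{n-k}\binom{p-1}{n};
\end{align*}
it suffices to show $A\eq(-1)^m/(2m+1)\pmod p$. Since $m<(p-1)/2$, every index occurring above satisfies $0\le n\le k+l\le 2m<p-1$, so $\binom{p-1}{n}\eq(-1)^n\pmod p$ and each $n+1\le 2m+1<p$ is a unit modulo $p$. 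Reducing the first case of Lemma \ref{xkx} modulo $p$ gives $\binom{x}{k}\binom{x+k}{k}\eq\binom{m}{k}\binom{m+k}{k}\pmod p$ for $0\le k\le m$. Substituting these reductions, I am reduced to evaluating
\begin{align*}
A\eq\sum_{k=0}^m\sum_{l=0}^m\binom{m}{k}\binom{m+k}{k}\binom{m}{l}\binom{m+l}{l}2^{k+l}S(k,l)\pmod p,\qquad S(k,l):=\sum_{n=0}^{k+l}\f{(-1)^n}{n+1}\binom{n}{l}\binom{l}{n-k}.
\end{align*}

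The heart of the argument is the closed form
\begin{align*}
S(k,l)=\f{(-1)^{k+l}}{(k+l+1)\binom{k+l}{k}},
\end{align*}
which I would establish by the Beta-integral technique already used for Lemma \ref{identity3}. Writing $1/(n+1)=\int_0^1 t^n\,\d t$ and putting $n=k+j$ recasts the inner sum as $\int_0^1(-t)^k\sum_{j=0}^l\binom{l}{j}\binom{k+j}{l}(-t)^j\,\d t$; recognizing $\binom{k+j}{l}=[z^l](1+z)^{k+j}$ collapses the $j$-sum to the coefficient extraction $[z^l]\,(1+z)^k\l(1-t(1+z)\r)^l$. Expanding this coefficient and integrating term by term leaves
\begin{align*}
S(k,l)=\f{(-1)^k\,k!\,l!}{(k+l+1)!}\sum_{a=0}^l(-1)^a\binom{k+a}{a}\binom{k}{l-a}.
\end{align*}
Finally, upper negation $\binom{k+a}{a}=(-1)^a\binom{-k-1}{a}$ turns the remaining sum into a Vandermonde convolution equal to $\binom{-1}{l}=(-1)^l$, and $k!\,l!/(k+l+1)!=1/\l((k+l+1)\binom{k+l}{k}\r)$, which gives the claimed formula.

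With this closed form in hand, $2^{k+l}S(k,l)=(-2)^{k+l}/\l((k+l+1)\binom{k+l}{k}\r)$, so the reduced expression for $A$ is precisely the left-hand side of \eqref{23} with $n=m$. Lemma \ref{identity3} then yields $A\eq(-1)^m/(2m+1)\pmod p$, and multiplying back by $p$ gives the assertion modulo $p^2$. I expect the main obstacle to be the evaluation of $S(k,l)$: the task is to find the manipulation that disentangles the two coupled factors $\binom{n}{l}\binom{l}{n-k}$ into a single Vandermonde-type sum. The generating-function extraction above---or, equivalently, the subset-of-subset identity $\binom{n}{l}\binom{l}{n-k}=\binom{n}{k}\binom{k}{k+l-n}$ followed by the same Beta integral---is what makes this tractable. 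The remaining points are purely bookkeeping: verifying that all indices stay in the range $n\le 2m<p-1$, where both $\binom{p-1}{n}\eq(-1)^n\pmod p$ and the first case of Lemma \ref{xkx} apply, and that every denominator $n+1$ is invertible modulo $p$.
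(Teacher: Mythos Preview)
Your argument is correct and follows essentially the same route as the paper: reduce modulo $p$ using the overall factor $p$, replace $\binom{x}{k}\binom{x+k}{k}$ by $\binom{m}{k}\binom{m+k}{k}$ via Lemma~\ref{xkx}, use $\binom{p-1}{n}\equiv(-1)^n$, evaluate the inner sum $S(k,l)=(-1)^{k+l}/\bigl((k+l+1)\binom{k+l}{k}\bigr)$, and finish with Lemma~\ref{identity3}. The only difference is that the paper quotes this closed form for $S(k,l)$ from \cite[(2.2)]{Liu2018}, whereas you supply a self-contained Beta-integral/Vandermonde derivation; your derivation is valid (the expansion $1-t(1+z)=(1-t)-tz$ is what produces the factor $\binom{k+a}{a}\binom{k}{l-a}$ after integrating $t^{k+a}(1-t)^{l-a}$).
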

\begin{proof}
Since $0\leq k,l\leq m<(p-1)/2$, we have $1\leq n+1\leq k+l+1< p-1$. By Lemma \ref{xkx},
\begin{align*}
&p\sum_{k=0}^m\sum_{l=0}^m\binom{x}{k}\binom{x+k}{k}\binom{x}{l}\binom{x+l}{l}2^{k+l}\sum_{n=0}^{k+l}\f{1}{n+1}\binom{n}{l}\binom{l}{n-k}\binom{p-1}{n}\\
&\quad\eq p\sum_{k=0}^m\sum_{l=0}^m\binom{m}{k}\binom{m+k}{k}\binom{m}{l}\binom{m+l}{l}2^{k+l}\sum_{n=0}^{k+l}\f{1}{n+1}\binom{n}{l}\binom{l}{n-k}\binom{p-1}{n}\\
&\quad\eq
p\sum_{k=0}^m\sum_{l=0}^m\binom{m}{k}\binom{m+k}{k}\binom{m}{l}\binom{m+l}{l}2^{k+l}\sum_{n=0}^{k+l}\f{(-1)^n}{n+1}\binom{n}{l}\binom{l}{n-k}\pmod{p^2}.
\end{align*}
From \cite[(2.2)]{Liu2018}, we know
$$
\sum_{n=0}^{k+l}\f{(-1)^n}{n+1}\binom{n}{l}\binom{l}{n-k}=\f{(-1)^{l+k}}{(l+k+1)\binom{l+k}{l}}.
$$
This, together with Lemma \ref{identity3}, gives
\begin{align*}
&p\sum_{k=0}^m\sum_{l=0}^m\binom{m}{k}\binom{m+k}{k}\binom{m}{l}\binom{m+l}{l}2^{k+l}\sum_{n=0}^{k+l}\f{(-1)^n}{n+1}\binom{n}{l}\binom{l}{n-k}\notag\\
&\quad\eq p\sum_{k=0}^m\sum_{l=0}^m\binom{m}{k}\binom{m+k}{k}\binom{m}{l}\binom{m+l}{l}\f{(-2)^{l+k}}{(l+k+1)\binom{l+k}{l}}\notag\\
&\quad=\f{p(-1)^m}{2m+1}\pmod{p^2}.
\end{align*}
This concludes the proof.
\end{proof}

\begin{lemma}\label{sigma1_2}
For any odd prime $p$ and $p$-adic integer $x$ with $m<(p-1)/2$, we have
\begin{align*}
&p\sum_{k=0}^m\sum_{l=m+1}^{p-m-1}\binom{x}{k}\binom{x+k}{k}\binom{x}{l}\binom{x+l}{l}2^{k+l}\sum_{n=0}^{k+l}\f{1}{n+1}\binom{n}{l}\binom{l}{n-k}\binom{p-1}{n}\\
&\quad\eq \f{pt(-1)^m}{2m+1}\pmod{p^2}.
\end{align*}
\end{lemma}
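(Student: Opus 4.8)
The plan is to combine the leading factor $p$ with a $p$-adic valuation analysis, showing that every summand is $\equiv 0\pmod{p^2}$ except for a single ``pole'' term, which I then evaluate explicitly. First I would apply the second congruence of Lemma \ref{xkx}: since $m+1\le l\le p-1-m$,
$$
\binom{x}{l}\binom{x+l}{l}\equiv\frac{(-1)^{m+l+1}pt\binom{m+l}{l}}{(l-m)\binom{l}{m}}\pmod{p^2},
$$
so this factor is divisible by $p$. Together with the overall $p$, and with the observation that $\binom{x}{k}\binom{x+k}{k}$ (for $0\le k\le m$), $2^{k+l}$, and the binomials $\binom{n}{l}\binom{l}{n-k}\binom{p-1}{n}$ are all $p$-integral, the only factor that can pull a summand back down to valuation $1$ is $\frac{1}{n+1}$ in the inner sum.

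Next I would locate the poles. The quantity $\frac{1}{n+1}$ fails to be a $p$-adic unit exactly when $n+1=p$, i.e. $n=p-1$. Since $0\le n\le k+l$ and $k+l\le m+(p-1-m)=p-1$, the index $n=p-1$ occurs in the range of summation only when $k+l=p-1$, which forces $(k,l)=(m,\,p-1-m)$. Therefore, for every $(k,l)\neq(m,p-1-m)$ the inner sum is $p$-integral and the summand, carrying the overall $p$ and the extra $p$ from $\binom{x}{l}\binom{x+l}{l}$, is $\equiv0\pmod{p^2}$; and even for $(k,l)=(m,p-1-m)$ all terms of the inner sum with $n<p-1$ vanish modulo $p^2$ for the same reason. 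Thus, modulo $p^2$, the whole double sum collapses to the single contribution from $k=m$, $l=p-1-m$, $n=p-1$.

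Finally I would evaluate that contribution. The $n=p-1$ term of the inner sum is $\frac1p\binom{p-1}{m}$, using $\binom{p-1}{p-1-m}=\binom{p-1}{m}$ and $\binom{p-1-m}{p-1-m}=1$, so the overall $p$ cancels the pole and it remains to compute
$$
\binom{x}{m}\binom{x+m}{m}\cdot\binom{x}{p-1-m}\binom{x+p-1-m}{p-1-m}\cdot 2^{p-1}\cdot\binom{p-1}{m}\pmod{p^2}.
$$
Here only the $O(p)$ factor needs full precision. By Lemma \ref{xkx}, $\binom{x}{m}\binom{x+m}{m}\equiv\binom{2m}{m}\pmod p$, and substituting $l=p-1-m$ (so $(-1)^{m+l+1}=-1$, $l-m=p-1-2m$, $\binom{m+l}{l}=\binom{p-1}{m}$) together with $\binom{p-1-m}{m}\equiv(-1)^m\binom{2m}{m}\pmod p$ gives
$$
\binom{x}{p-1-m}\binom{x+p-1-m}{p-1-m}\equiv\frac{pt}{(2m+1)\binom{2m}{m}}\pmod{p^2};
$$
with $2^{p-1}\equiv1$ and $\binom{p-1}{m}\equiv(-1)^m\pmod p$, multiplying everything yields precisely $\frac{pt(-1)^m}{2m+1}$.

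The main obstacle is the bookkeeping in this last step: one must track which factors are needed only modulo $p$ and which modulo $p^2$, and one needs the auxiliary congruence $\binom{p-1-m}{m}\equiv(-1)^m\binom{2m}{m}\pmod p$ to simplify the denominator. The conceptual heart, however, is the valuation argument isolating the unique pole at $(k,l,n)=(m,p-1-m,p-1)$, which simultaneously explains why the answer has the shape $\frac{pt(-1)^m}{2m+1}$ and why it differs from the conclusion of Lemma \ref{sigma1_1} only by the extra factor $t$.
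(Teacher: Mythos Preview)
Your proposal is correct and follows essentially the same approach as the paper: both apply Lemma~\ref{xkx} to extract an extra factor of $p$ from $\binom{x}{l}\binom{x+l}{l}$, then use the $p$-adic valuation of $\tfrac{1}{n+1}$ to see that only the single term $(k,l,n)=(m,\,p-1-m,\,p-1)$ survives modulo $p^2$, and finally evaluate that term. Your write-up is simply more explicit about the pole-locating valuation argument and about which factors need precision $p$ versus $p^2$, whereas the paper compresses these steps into a single displayed chain of congruences.
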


\begin{proof}
In view of Lemma \ref{xkx}, we deduce that
\begin{align*}
&p\sum_{k=0}^m\sum_{l=m+1}^{p-m-1}\binom{x}{k}\binom{x+k}{k}\binom{x}{l}\binom{x+l}{l}2^{k+l}\sum_{n=0}^{k+l}\f{1}{n+1}\binom{n}{l}\binom{l}{n-k}\binom{p-1}{n}\\
&\quad\eq p^2t\sum_{k=0}^m\sum_{l=m+1}^{p-m-1}\f{(-1)^{m+l+1}\binom{m}{k}\binom{m+k}{k}\binom{m+l}{l}}{(l-m)\binom{l}{m}}2^{k+l}\sum_{n=0}^{k+l}\f{1}{n+1}\binom{n}{l}\binom{l}{n-k}\binom{p-1}{n}\\
&\quad\eq\f{-(-1)^mpt\binom{2m}{m}\binom{p-1}{m}2^{p-1}}{(p-2m-1)\binom{p-1-m}{m}}\\
&\quad\eq
\f{(-1)^mpt}{2m+1}\pmod{p^2}.
\end{align*}
This proves the desired result.
\end{proof}

\medskip

\begin{proof}[Proof of Theorem \ref{mainth1}] We divide the proof into three cases.

{\it Case 1}. $m<(p-1)/2$.

From \cite[(3.5)]{Liu2018}, we know
\begin{align*}
\sum_{n=0}^{p-1}\binom{n}{k}\binom{n}{l}=p\sum_{n=0}^{k+l}\f{1}{n+1}\binom{n}{l}\binom{l}{n-k}\binom{p-1}{n}.
\end{align*}
Therefore,
\begin{align*}
\sum_{n=0}^{p-1}t_n(x)^2&=\sum_{n=0}^{p-1}\sum_{l=0}^n\sum_{k=0}^n\binom{n}{k}\binom{n}{l}\binom{x}{k}\binom{x+k}{k}\binom{x}{l}\binom{x+l}{l}2^{k+l}\\
&=\sum_{k=0}^{p-1}\sum_{l=0}^{p-1}\binom{x}{k}\binom{x+k}{k}\binom{x}{l}\binom{x+l}{l}2^{k+l}\sum_{n=0}^{p-1}\binom{n}{k}\binom{n}{l}\\
&=p\sum_{k=0}^{p-1}\sum_{l=0}^{p-1}\binom{x}{k}\binom{x+k}{k}\binom{x}{l}\binom{x+l}{l}2^{k+l}\sum_{n=0}^{k+l}\f{1}{n+1}\binom{n}{l}\binom{l}{n-k}\binom{p-1}{n}\\
&=\sum_{s=1}^9\sigma_s,
\end{align*}
where
\begin{gather*}
\sigma_1=\sum_{k=0}^m\sum_{l=0}^mf(k,l),\quad \sigma_2=\sum_{k=0}^m\sum_{l=m+1}^{p-1-m}f(k,l),\quad \sigma_3=\sum_{k=0}^m\sum_{l=p-m}^{p-1}f(k,l),\\
\sigma_4=\sum_{k=m+1}^{p-1-m}\sum_{l=0}^mf(k,l),\quad \sigma_5=\sum_{k=m+1}^{p-1-m}\sum_{l=m+1}^{p-1-m}f(k,l),\quad \sigma_6=\sum_{k=m+1}^{p-1-m}\sum_{l=p-m}^{p-1}f(k,l),\\
\sigma_7=\sum_{k=p-m}^{p-1}\sum_{l=0}^mf(k,l),\quad \sigma_8=\sum_{k=p-m}^{p-1}\sum_{l=m+1}^{p-1-m}f(k,l),\quad \sigma_9=\sum_{k=p-m}^{p-1}\sum_{l=p-m}^{p-1}f(k,l),
\end{gather*}
and $f(k,l)\ (0\leq k,l\leq p-1)$ stand for the summands
$$
p\binom{x}{k}\binom{x+k}{k}\binom{x}{l}\binom{x+l}{l}2^{k+l}\sum_{n=0}^{k+l}\f{1}{n+1}\binom{n}{l}\binom{l}{n-k}\binom{p-1}{n}.
$$
By the symmetry of $k$ and $l$, we have $\sigma_2=\sigma_4$. Moreover, in view of Lemma \ref{xkx}, we immediately obtain $\sigma_3\eq\sigma_5\eq\sigma_6\eq\sigma_7\eq\sigma_8\eq\sigma_9\eq0\pmod{p^2}$. Therefore,
$$
\sum_{n=0}^{p-1}t_n(x)^2\eq \sigma_1+2\sigma_2\pmod{p^2}.
$$
This, together with Lemmas \ref{sigma1_1} and \ref{sigma1_2}, gives
\begin{equation*}
\sum_{n=0}^{p-1}t_n(x)^2\eq \f{(-1)^m(p+2pt)}{2m+1}\eq \f{(-1)^{\<x\>_p}(p+2(x-\<x\>_p))}{2x+1}\pmod{p^2},
\end{equation*}
as desired.

\medskip

{\it Case 2}. $m>(p-1)/2$.

Clearly, $t_n(x)=t_n(-1-x)$ for all nonnegative integers $n$. So
$$
\sum_{n=0}^{p-1}t_n(x)^2=\sum_{n=0}^{p-1}t_n(-1-x)^2.
$$
Since $\<-1-x\>_{p}=p-1-m<(p-1)/2$, by Theorem \ref{mainth1} in Case 1, we have
\begin{align*}
\sum_{n=0}^{p-1}t_n(x)^2&\eq (-1)^{\<-1-x\>_{p}}\f{p+2(-1-x-\<-1-x\>_{p})}{2(-x-1)+1}\\
&=(-1)^{\<x\>_p}\f{p+2(x-\<x\>_p)}{2x+1}\pmod{p^2}.
\end{align*}
This proves Theorem \ref{mainth1} in Case 2.

\medskip

{\it Case 3}. $m=(p-1)/2$.

In this case, $m+1>p-1-m$. So we have
$$
\sum_{n=0}^{p-1}t_n(x)^2\eq \sigma_1\pmod{p^2},
$$
where $\sigma_1$ is defined as in Case 1. In view of Lemmas \ref{xkx} and \ref{identity3}, we have
\begin{align*} &\sigma_1=p\sum_{k=0}^m\sum_{l=0}^m\binom{x}{k}\binom{x+k}{k}\binom{x}{l}\binom{x+l}{l}2^{k+l}\sum_{n=0}^{k+l}\f{1}{n+1}\binom{n}{l}\binom{l}{n-k}\binom{p-1}{n}\notag\\
	&\quad\eq
	p\sum_{k=0}^m\sum_{l=0}^m\binom{m}{k}\binom{m+k}{k}\binom{m}{l}\binom{m+l}{l}\dfrac{(-2)^{k+l}}{(k+l+1)\binom{k+l}{k}}\notag\\
	&\quad
	=\f{p(-1)^m}{2m+1}\\
&\quad =\l(\f{-1}{p}\r)\pmod{p^2}.
\end{align*}
This proves Theorem \ref{mainth1} in Case 3.

The proof of Theorem \ref{mainth1} is now complete.
\end{proof}

\section{Proofs of Theorem \ref{mainth2} and Corollary \ref{sunconm}}
\setcounter{lemma}{0}

To show Theorem \ref{mainth2}, we need the following preliminary results.
\begin{lemma}\label{identity2}
	For nonnegative integers $k$ and $l$, we have
	$$ \sum_{n=0}^{k+l}\f{(-1)^n}{n+2}\binom{n}{l}\binom{l}{n-k}=\f{(-1)^{l+k}}{\binom{k+l+2}{l+1}}.
	$$
	\begin{proof}
		Letting $x=2$ in the following partial fraction decomposition
		\begin{align*}
			\sum_{n=0}^{k+l}\dfrac{(-1)^{l+k+n}}{x+n}\binom{n}{l}\binom{l}{n-k}=\frac{{(x)_l}{(x)_k}}{{(x)_{l+k+1}}},
		\end{align*}
	we are done.
	\end{proof}
\end{lemma}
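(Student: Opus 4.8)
The plan is to prove the lemma by first establishing the partial fraction decomposition quoted in the statement and then specializing at $x=2$. The central object is the proper rational function
$$
F(x)=\frac{(x)_l(x)_k}{(x)_{l+k+1}},
$$
whose numerator has degree $l+k$ while the denominator $(x)_{l+k+1}=\prod_{j=0}^{l+k}(x+j)$ has degree $l+k+1$ with simple zeros at $x=0,-1,\dots,-(l+k)$. Since the numerator has strictly smaller degree, $F$ admits a partial fraction expansion $F(x)=\sum_{n=0}^{l+k}r_n/(x+n)$ with $r_n=\operatorname*{Res}_{x=-n}F(x)$ and no polynomial part, which is exactly the shape of the claimed identity.

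First I would compute the residues. Cancelling the factor $(x+n)$ from the denominator gives
$$
r_n=\frac{(-n)_l(-n)_k}{\prod_{0\le j\le l+k,\ j\ne n}(j-n)},
$$
and splitting the product over $j<n$ and $j>n$ shows the denominator equals $(-1)^n\,n!\,(l+k-n)!$. For the numerator, $(-n)_l$ vanishes when $n<l$ and equals $(-1)^l\,n!/(n-l)!$ when $n\ge l$, and similarly for $(-n)_k$. Hence $r_n=0$ precisely when $n<\max(k,l)$, which matches the vanishing of $\binom{n}{l}\binom{l}{n-k}$ on that range, and for $n\ge\max(k,l)$ a short simplification yields
$$
r_n=\frac{(-1)^{l+k-n}\,n!}{(n-l)!\,(n-k)!\,(l+k-n)!}=(-1)^{l+k+n}\binom{n}{l}\binom{l}{n-k}.
$$
This establishes the decomposition
$$
\sum_{n=0}^{k+l}\frac{(-1)^{l+k+n}}{x+n}\binom{n}{l}\binom{l}{n-k}=\frac{(x)_l(x)_k}{(x)_{l+k+1}}.
$$

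Finally I would substitute $x=2$. Using $(2)_l=(l+1)!$, $(2)_k=(k+1)!$ and $(2)_{l+k+1}=(l+k+2)!$, the right-hand side collapses to $(l+1)!\,(k+1)!/(l+k+2)!=1/\binom{k+l+2}{l+1}$, and dividing the whole identity through by $(-1)^{l+k}$ produces exactly the claimed formula. The only genuinely delicate point is the residue bookkeeping: correctly tracking the sign $(-1)^n$ coming from the factors with $j<n$, and confirming that the surviving numerator $(-n)_l(-n)_k$ recombines into $\binom{n}{l}\binom{l}{n-k}$. Once that is verified, the remaining steps are routine factorial arithmetic.
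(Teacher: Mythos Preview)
Your proposal is correct and follows the same approach as the paper: both rely on the partial fraction decomposition $\sum_{n=0}^{k+l}\frac{(-1)^{l+k+n}}{x+n}\binom{n}{l}\binom{l}{n-k}=\frac{(x)_l(x)_k}{(x)_{l+k+1}}$ and then substitute $x=2$. The only difference is that the paper simply quotes this decomposition, whereas you supply a full residue computation to verify it; your bookkeeping of the signs and the identification $r_n=(-1)^{l+k+n}\binom{n}{l}\binom{l}{n-k}$ is accurate.
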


We also need the following identity similar to Lemma \ref{identity3}.

\begin{lemma}\label{identity4}
	For any nonnegative integer $n$, we have
	\begin{align}\label{id32}
		\sum_{k=0}^n\sum_{l=0}^{n}\binom{n}{k}\binom{n+k}{k}\binom{n}{l}\binom{n+l}{l}\dfrac{(-2)^{k+l}}{\binom{k+l+2}{k+1}}=\f14-\f{(-1)^n(2n^2+2n-1)}{8n+4}.
	\end{align}
\end{lemma}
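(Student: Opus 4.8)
The plan is to follow the strategy of Lemma \ref{identity3}: linearize the weight with a Beta integral, pass to the polynomial $P(x):=\sum_{k=0}^n\binom nk\binom{n+k}k(2x-1)^k$ via Pfaff's transformation \eqref{pfaff}, and finish with the partial fraction decomposition \eqref{pfd}. Write $a_k:=\binom nk\binom{n+k}k$. The starting point is the representation
$$\frac{1}{\binom{k+l+2}{k+1}}=\frac{(k+1)!\,(l+1)!}{(k+l+2)!}=(k+1)\int_0^1 x^{k}(1-x)^{l+1}\,\d x,$$
together with $(-2)^{k+l}x^{k}(1-x)^{l+1}=(1-x)(-2x)^{k}(2x-2)^{l}$. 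Substituting these, the left-hand side of \eqref{id32} becomes
$$\int_0^1(1-x)\Big(\sum_{k=0}^n a_k(k+1)(-2x)^k\Big)\Big(\sum_{l=0}^n a_l(2x-2)^l\Big)\,\d x.$$
Since $\sum_k a_k(k+1)(-2x)^k=\frac{\d}{\d x}\big(x\sum_k a_k(-2x)^k\big)$, and Pfaff's transformation gives $\sum_k a_k(-2x)^k=(-1)^nP(x)$ together with $\sum_l a_l(2x-2)^l=(-1)^nP(1-x)$, everything collapses to the single integral
$$\int_0^1(1-x)\,P(1-x)\,\frac{\d}{\d x}\big(xP(x)\big)\,\d x.$$

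Next I would split this as $J_1+J_2$, where $J_1=\int_0^1(1-x)P(x)P(1-x)\,\d x$ and $J_2=\int_0^1 x(1-x)P'(x)P(1-x)\,\d x$. The term $J_1$ is handled instantly: the change of variable $x\mapsto 1-x$ gives $2J_1=\int_0^1 P(x)P(1-x)\,\d x$, and this last integral is exactly the quantity computed in the proof of Lemma \ref{identity3}, so $J_1=\dfrac{(-1)^n}{2(2n+1)}$.

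The main obstacle is $J_2$. Passing to the variable $v=2x-1$ and integrating the resulting monomials $\int_{-1}^1(1-v^2)v^{k+l-1}\,\d v$, one reduces $J_2$ to the closed-form double sum
$$J_2=\frac12\sum_{k,l}a_ka_l\,k\,\frac{(-1)^l-(-1)^k}{(k+l)(k+l+2)}.$$
I would evaluate this by the partial fraction decomposition \eqref{pfd}: after summing over the inner index, the Pochhammer factors $(1-k)_n$ and $(-1-k)_n$ vanish except at the endpoints $k\in\{n-1,n\}$, so the double sum collapses to a few explicit boundary terms. The delicate point, and the real crux, is that the factor $k$ forbids the diagonal term $(k,l)=(0,0)$; restoring this omitted term produces the additive constant $\tfrac14$ (otherwise absent in Lemma \ref{identity3}), and careful bookkeeping yields $J_2=\tfrac14-\dfrac{(-1)^n(2n^2+2n+1)}{4(2n+1)}$. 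Adding $J_1$ and using $2-(2n^2+2n+1)=-(2n^2+2n-1)$ produces the right-hand side of \eqref{id32}.
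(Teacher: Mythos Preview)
Your proposal is correct and uses exactly the same three tools as the paper --- the Beta integral, Pfaff's transformation \eqref{pfaff}, and the partial fraction decomposition \eqref{pfd} --- but packages them differently. The paper writes $\binom{k+l+2}{k+1}^{-1}=(k+l+3)B(k+2,l+2)$ and then splits the factor $k+l+3$ into three symmetric pieces, each evaluated separately via \eqref{pfd}; in those computations the quantity $\sum_{l}\binom nl\binom{n+l}l$ appears in every piece and cancels in the end. Your asymmetric representation $(k+1)B(k+1,l+2)$ leads instead to the split $J_1+J_2$, and $J_1$ is dispatched for free by recycling the very integral that proved Lemma \ref{identity3}, which is a genuine economy.

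The only place your write-up is thinner than the paper's is the evaluation of $J_2$. Your claimed value $J_2=\tfrac14-\dfrac{(-1)^n(2n^2+2n+1)}{4(2n+1)}$ is correct (and the explanation that the constant $\tfrac14$ enters through the missing $(k,l)=(0,0)$ boundary is on target), but ``careful bookkeeping'' hides real work: only the $(-1)^l$-half of $J_2$ feeds directly into \eqref{pfd} after summing over $l$; the $(-1)^k$-half has no alternating sign on the inner index, so one must first rewrite $k/(k+l)=1-l/(k+l)$ (or swap the order of summation) before \eqref{pfd} applies. Doing so produces exactly the same auxiliary sums $\sum_{l}\binom nl\binom{n+l}l$ that appear and cancel in the paper's computation. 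So the two arguments have comparable length once fully written out; yours front-loads the saving into $J_1$.
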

\begin{proof}
The proof proceeds similarly as the one of Lemma \ref{identity3}. It is easy to see that
\begin{align*}
\f{1}{\binom{k+l+2}{k+1}}&=\f{(k+l+3)\Gamma(k+2)\Gamma(l+2)}{\Gamma(k+l+4)}=(k+l+3)B(k+2,l+2)\\
&=(k+l+3)\int_0^1 x^{k+1}(1-x)^{l+1}\d x.
\end{align*}
Then the left-hand side of \eqref{id32} becomes
\begin{align}\label{id32lhs}
&\sum_{k=0}^n\sum_{l=0}^{n}\binom{n}{k}\binom{n+k}{k}\binom{n}{l}\binom{n+l}{l}(k+l+3)(-2)^{k+l}\int_0^1 x^{k+1}(1-x)^{l+1}\d x\notag\\
&\quad=\f14\sum_{k=0}^n\sum_{l=0}^{n}\binom{n}{k}\binom{n+k}{k}\binom{n}{l}\binom{n+l}{l}(k+l+3)\int_0^1(-2x)^{k+1}(2x-2)^{l+1}\d x.
\end{align}

Differentiating both sides of \eqref{pfaff} with respect to $z$, we obtain
\begin{equation}\label{pfaffdiff}
\sum_{k=1}^n\binom{n}{k}\binom{n+k}{k}k(-z)^{k-1}=(-1)^{n-1}\sum_{k=1}^n\binom{n}{k}\binom{n+k}{k}k(z-1)^{k-1}.
\end{equation}
With the help of \eqref{pfaff} and \eqref{pfaffdiff}, we get
\begin{align*}
&\sum_{k=0}^n\sum_{l=0}^{n}\binom{n}{k}\binom{n+k}{k}\binom{n}{l}\binom{n+l}{l}k\int_0^1(-2x)^{k+1}(2x-2)^{l+1}\d x\notag\\
&\quad=\int_0^14x^2(2x-2)\l(\sum_{k=1}^n\binom{n}{k}\binom{n+k}{k}k(-2x)^{k-1}\sum_{l=0}^{n}\binom{n}{l}\binom{n+l}{l}(2x-2)^{l}\r)\d x\\
&\quad=-\int_0^14x^2(2x-2)\l(\sum_{k=1}^n\binom{n}{k}\binom{n+k}{k}k(2x-1)^{k-1}\sum_{l=0}^{n}\binom{n}{l}\binom{n+l}{l}(1-2x)^{l}\r)\d x\\
&\quad=-\sum_{k=1}^n\sum_{l=0}^{n}\binom{n}{k}\binom{n+k}{k}\binom{n}{l}\binom{n+l}{l}k\int_0^14x^2(2x-2)(2x-1)^{k-1}(1-2x)^l\d x.
\end{align*}
It is routine to evaluate that
\begin{align*}
&\int_0^14x^2(2x-2)(2x-1)^{k-1}(1-2x)^l\d x\\
&\quad=\f{(-1)^k-(-1)^{l}}{2(k+l)}-\f{(-1)^k+(-1)^{l}}{2(k+l+1)}-\f{(-1)^k-(-1)^{l}}{2(k+l+2)}+\f{(-1)^k+(-1)^{l}}{2(k+l+3)}.
\end{align*}
In view of \eqref{pfd}, we have
\begin{align*}
&-\sum_{k=1}^n\sum_{l=0}^{n}\binom{n}{k}\binom{n+k}{k}\binom{n}{l}\binom{n+l}{l}\f{k}{2(k+l)}\l((-1)^k-(-1)^l\r)\\
&\quad=\f12\sum_{k=1}^n\sum_{l=0}^{n}\binom{n}{k}\binom{n+k}{k}\binom{n}{l}\binom{n+l}{l}\f{k(-1)^l}{k+l}\\
&\qquad-\f12\sum_{k=1}^n\sum_{l=0}^{n}\binom{n}{k}\binom{n+k}{k}\binom{n}{l}\binom{n+l}{l}(-1)^k\\
&\qquad+\f12\sum_{k=1}^n\sum_{l=0}^{n}\binom{n}{k}\binom{n+k}{k}\binom{n}{l}\binom{n+l}{l}\f{l(-1)^k}{k+l}\\
&\quad=0-\f{(-1)^n-1}{2}\sum_{l=0}^n\binom{n}{l}\binom{n+l}{l}-\f{1}{2}\sum_{l=1}^n\binom{n}{l}\binom{n+l}{l}\\
&\quad=\f12-\f{(-1)^n}{2}\sum_{l=0}^n\binom{n}{l}\binom{n+l}{l}.
\end{align*}
Similarly,
\begin{align*}
&-\sum_{k=1}^n\sum_{l=0}^{n}\binom{n}{k}\binom{n+k}{k}\binom{n}{l}\binom{n+l}{l}\f{k}{2(k+l+1)}\l((-1)^k+(-1)^l\r)\\
&\quad=\f{(-1)^n}{4n+2}-\f{(-1)^n}{2}\sum_{l=0}^n\binom{n}{l}\binom{n+l}{l},\\
&-\sum_{k=1}^n\sum_{l=0}^{n}\binom{n}{k}\binom{n+k}{k}\binom{n}{l}\binom{n+l}{l}\f{k}{2(k+l+2)}\l((-1)^k-(-1)^l\r)\\
&\quad=\f{(-1)^n(2n^2+2n+1)}{4n+2}-\f{(-1)^n}{2}\sum_{l=0}^n\binom{n}{l}\binom{n+l}{l},\\
&-\sum_{k=1}^n\sum_{l=0}^{n}\binom{n}{k}\binom{n+k}{k}\binom{n}{l}\binom{n+l}{l}\f{k}{2(k+l+3)}\l((-1)^k+(-1)^l\r)\\
&\quad=\f{3(-1)^n(2n^2(n+1)^2-1)}{(4n+2)(2n+3)(2n-1)}-\f{(-1)^n}{2}\sum_{l=0}^n\binom{n}{l}\binom{n+l}{l}.
\end{align*}
Combining the above, we arrive at
\begin{align}\label{id32key'}
&\sum_{k=0}^n\sum_{l=0}^{n}\binom{n}{k}\binom{n+k}{k}\binom{n}{l}\binom{n+l}{l}k\int_0^1(-2x)^{k+1}(2x-2)^{l+1}\d x\notag\\
&\quad=\f12-\f{(-1)^n}{4n+2}-\f{(-1)^n(2n^2+2n+1)}{4n+2}+\f{3(-1)^n(2n^2(n+1)^2-1)}{(4n+2)(2n+3)(2n-1)}.
\end{align}
By the symmetry of $k$ and $l$, we also have
\begin{align}\label{id32key''}
&\sum_{k=0}^n\sum_{l=0}^{n}\binom{n}{k}\binom{n+k}{k}\binom{n}{l}\binom{n+l}{l}l\int_0^1(-2x)^{k+1}(2x-2)^{l+1}\d x\notag\\
&\quad=\f12-\f{(-1)^n}{4n+2}-\f{(-1)^n(2n^2+2n+1)}{4n+2}+\f{3(-1)^n(2n^2(n+1)^2-1)}{(4n+2)(2n+3)(2n-1)}.
\end{align}

Using \eqref{pfaff} again, we obtain
\begin{align*}
&\sum_{k=0}^n\sum_{l=0}^{n}\binom{n}{k}\binom{n+k}{k}\binom{n}{l}\binom{n+l}{l}\int_0^1(-2x)^{k+1}(2x-2)^{l+1}\d x\\
&\quad=-\int_0^12x(2x-2)\l(\sum_{k=0}^n\binom{n}{k}\binom{n+k}{k}(-2x)^{k}\sum_{l=0}^{n}\binom{n}{l}\binom{n+l}{l}(2x-2)^{l}\r)\d x\\
&\quad=-\int_0^142x(2x-2)\l(\sum_{k=0}^n\binom{n}{k}\binom{n+k}{k}(2x-1)^{k}\sum_{l=0}^{n}\binom{n}{l}\binom{n+l}{l}(1-2x)^{l}\r)\d x\\
&\quad=-\sum_{k=0}^n\sum_{l=0}^{n}\binom{n}{k}\binom{n+k}{k}\binom{n}{l}\binom{n+l}{l}\int_0^12x(2x-2)(2x-1)^{k}(1-2x)^l\d x\\
&\quad=\f{1}{2}\sum_{k=0}^n\sum_{l=0}^{n}\binom{n}{k}\binom{n+k}{k}\binom{n}{l}\binom{n+l}{l}\l(\f{(-1)^k+(-1)^l}{k+l+1}-\f{(-1)^k+(-1)^l}{k+l+3}\r).
\end{align*}
Then, by \eqref{pfd} and noticing the symmetry of $k$ and $l$, we deduce that
\begin{align}\label{id32key'''}
&\sum_{k=0}^n\sum_{l=0}^{n}\binom{n}{k}\binom{n+k}{k}\binom{n}{l}\binom{n+l}{l}\int_0^1(-2x)^{k+1}(2x-2)^{l+1}\d x\notag\\
&\quad=\f{(-1)^n}{2n+1}-\f{(-1)^n(2n^2(n+1)^2-1)}{(2n+1)(2n+3)(2n-1)}.
\end{align}

Finally, substituting \eqref{id32key'}--\eqref{id32key'''} into \eqref{id32lhs} gives
\begin{align*}
&\sum_{k=0}^n\sum_{l=0}^{n}\binom{n}{k}\binom{n+k}{k}\binom{n}{l}\binom{n+l}{l}\dfrac{(-2)^{k+l}}{\binom{k+l+2}{k+1}}=\f14-\f{(-1)^n(2n^2+2n-1)}{8n+4},
\end{align*}
as desired.
\end{proof}

\begin{lemma}\label{sigma2_1}
	For any odd prime $p$ and $p$-adic integer $x$ with $m<(p-1)/2$, we have
	\begin{align*}
		&\sum_{k=0}^m\sum_{l=0}^m\binom{x}{k}\binom{x+k}{k}\binom{x}{l}\binom{x+l}{l}2^{k+l}\sum_{n=0}^{k+l}\f{p(p+1)}{n+2}\binom{n}{l}\binom{l}{n-k}\binom{p-1}{n}\\
		&\quad\eq
		\f p4-\f{(-1)^m(2m^2+2m-1)p}{8m+4}\pmod{p^2}.
	\end{align*}
\end{lemma}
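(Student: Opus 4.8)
The plan is to mirror the proof of Lemma 2.3, with Lemma 3.1 playing the role of the evaluation $\sum_n \frac{(-1)^n}{n+1}\binom{n}{l}\binom{l}{n-k}$ and Lemma 3.2 playing the role of Lemma 2.2. The decisive structural feature is that the left-hand side carries the explicit prefactor $p(p+1)$. Since $p(p+1)\equiv p\pmod{p^2}$ and the remaining triple sum is a $p$-adic integer, I may replace $p(p+1)$ by $p$ at the outset and then carry out all subsequent reductions modulo $p$ only.

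First I would record the arithmetic consequences of the hypothesis $m<(p-1)/2$, namely $0\le k+l\le 2m\le p-3$. This ensures that each denominator $n+2$ (for $0\le n\le k+l$), as well as $\binom{k+l+2}{k+1}$ and $8m+4=4(2m+1)$, is coprime to $p$, so that every fraction below is a genuine $p$-adic integer and no division is illegitimate. Because of the overall factor $p$, Lemma 2.1 permits me to replace each $\binom{x}{k}\binom{x+k}{k}$ by $\binom{m}{k}\binom{m+k}{k}$ modulo $p$; the discarded $pt$-corrections contribute $O(p^2)$ after multiplication by $p$, since $H_{m+k}$ and $H_{m-k}$ are $p$-adic integers in this range. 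Likewise I would replace $\binom{p-1}{n}$ by $(-1)^n$ modulo $p$.

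Next I would evaluate the inner sum over $n$ by Lemma 3.1, which gives
$$\sum_{n=0}^{k+l}\frac{(-1)^n}{n+2}\binom{n}{l}\binom{l}{n-k}=\frac{(-1)^{k+l}}{\binom{k+l+2}{l+1}}=\frac{(-1)^{k+l}}{\binom{k+l+2}{k+1}}.$$
Absorbing the sign $(-1)^{k+l}$ into $2^{k+l}$ to form $(-2)^{k+l}$, the whole left-hand side becomes, modulo $p^2$,
$$p\sum_{k=0}^m\sum_{l=0}^m\binom{m}{k}\binom{m+k}{k}\binom{m}{l}\binom{m+l}{l}\frac{(-2)^{k+l}}{\binom{k+l+2}{k+1}}.$$
Applying Lemma 3.2 with $n=m$ evaluates the double sum to $\frac14-\frac{(-1)^m(2m^2+2m-1)}{8m+4}$, and multiplying through by $p$ produces exactly the asserted right-hand side.

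I do not anticipate any genuine obstacle, as the argument is essentially a transcription of the proof of Lemma 2.3 once Lemmas 3.1 and 3.2 are available. The only steps requiring care are the modular bookkeeping—verifying that $p(p+1)$ collapses to $p$, that the dropped Lemma 2.1 corrections are truly $O(p^2)$, and that the index range keeps every relevant quantity prime to $p$ so that the first case of Lemma 2.1 and the closed forms of Lemmas 3.1 and 3.2 all apply.
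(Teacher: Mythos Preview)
Your proposal is correct and follows essentially the same approach as the paper's proof: reduce $p(p+1)$ to $p$, invoke Lemma~\ref{xkx} and $\binom{p-1}{n}\equiv(-1)^n\pmod{p}$ to pass from $x$ to $m$, apply Lemma~\ref{identity2} to collapse the inner sum, and finish with Lemma~\ref{identity4}. The only difference is that you spell out the $p$-adic bookkeeping (coprimality of $n+2$, $\binom{k+l+2}{k+1}$, and $8m+4$ to $p$) in slightly more detail than the paper does.
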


\begin{proof}
	Since $0\leq k,l\leq m<(p-1)/2$, we have $1\leq n+2\leq k+l+2\leq p-1$. By Lemma \ref{xkx},
	\begin{align*}
		&\sum_{k=0}^m\sum_{l=0}^m\binom{x}{k}\binom{x+k}{k}\binom{x}{l}\binom{x+l}{l}2^{k+l}\sum_{n=0}^{k+l}\f{p(p+1)}{n+2}\binom{n}{l}\binom{l}{n-k}\binom{p-1}{n}\notag\\
		&\quad\eq p\sum_{k=0}^m\sum_{l=0}^m\binom{m}{k}\binom{m+k}{k}\binom{m}{l}\binom{m+l}{l}2^{k+l}\sum_{n=0}^{k+l}\f{(-1)^n}{n+2}\binom{n}{l}\binom{l}{n-k}\pmod{p^2}.
	\end{align*}
	Moreover, with the help of Lemmas \ref{identity2} and \ref{identity4}, we get
	\begin{align*}\label{sigma2_1decom1}
		&p\sum_{k=0}^m\sum_{l=0}^m\binom{m}{k}\binom{m+k}{k}\binom{m}{l}\binom{m+l}{l}2^{k+l}\sum_{n=0}^{k+l}\f{(-1)^n}{n+2}\binom{n}{l}\binom{l}{n-k}\notag\\
		&\quad= p\sum_{k=0}^m\sum_{l=0}^m\binom{m}{k}\binom{m+k}{k}\binom{m}{l}\binom{m+l}{l}\f{(-2)^{l+k}}{\binom{k+l+2}{l+1}}\notag\\
		&\quad=
		\f p4-\f{(-1)^m(2m^2+2m-1)p}{8m+4}.
	\end{align*}
Combining the above, we are done.
\end{proof}

\begin{lemma}\label{sigma2_2}
	For any odd prime $p$ and $p$-adic integer $x$ with $m<(p-1)/2$, we have
	\begin{align*}
		&\sum_{k=0}^m\sum_{l=m+1}^{p-m-1}\binom{x}{k}\binom{x+k}{k}\binom{x}{l}\binom{x+l}{l}2^{k+l}\sum_{n=0}^{k+l}\f{p(p+1)}{n+2}\binom{n}{l}\binom{l}{n-k}\binom{p-1}{n}\\
		&\quad\eq \f{pt(-1)^m(1-2m^2-2m)}{8m+4}\pmod{p^2}.
	\end{align*}
\end{lemma}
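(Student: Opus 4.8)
The plan is to follow the proof of Lemma \ref{sigma1_2}, adapting it to the shifted denominator $n+2$ and the extra factor $p(p+1)$. First I would apply Lemma \ref{xkx}: since $0\le k\le m<(p-1)/2$, the factor $\binom{x}{k}\binom{x+k}{k}$ may be replaced by its leading term $\binom{m}{k}\binom{m+k}{k}$, whereas for $m+1\le l\le p-m-1$ the factor $\binom{x}{l}\binom{x+l}{l}$ is congruent modulo $p^2$ to $\f{(-1)^{m+l+1}pt\binom{m+l}{l}}{(l-m)\binom{l}{m}}$, which already carries a factor $pt$. Note that $(l-m)\binom{l}{m}$ is a $p$-adic unit throughout this range, so the resulting coefficient lies in $\Z_p$.

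The key observation is a pole count. The product of the two binomial factors is $O(p)$ and the summand carries the further factor $p(p+1)=O(p)$, so a generic term is $O(p^2)$ and disappears modulo $p^2$. A term survives only when the inner sum $\sum_{n=0}^{k+l}\f{1}{n+2}\binom{n}{l}\binom{l}{n-k}\binom{p-1}{n}$ acquires a simple pole $\f1p$. Since $2\le n+2\le k+l+2\le p+1$, this occurs precisely at $n=p-2$, and only when $k+l\ge p-2$. Under the constraints $0\le k\le m$ and $m+1\le l\le p-1-m$, the admissible pairs are therefore $(m,p-1-m)$, coming from $k+l=p-1$, together with $(m,p-2-m)$ and $(m-1,p-1-m)$ coming from $k+l=p-2$ (the latter two present only when $m\ge1$).

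For each admissible pair I would isolate the $n=p-2$ summand. Here $\binom{p-1}{p-2}=p-1$ and $\f{1}{n+2}=\f1p$, so the surviving $p$-powers amount to $p(p+1)\cdot pt\cdot\f1p\cdot(p-1)=pt(p^2-1)\eq -pt\pmod{p^3}$; since this is multiplied by a $p$-adic integer, the remaining binomial factors may be reduced modulo $p$. Using the standard congruences $\binom{p-1}{m}\eq(-1)^m$, $\binom{p-2}{j}\eq(-1)^j(j+1)$, $\binom{p-1-m}{m}\eq(-1)^m\binom{2m}{m}$, $\binom{p-2-m}{m}\eq(-1)^m\f{2m+1}{m+1}\binom{2m}{m}$ and $2^{p-1}\eq1\pmod p$, a routine simplification gives the three contributions $\f{-(-1)^mpt\,m(m+1)}{2m+1}$, $\f{(-1)^mpt(m+1)^2}{4(2m+1)}$ and $\f{(-1)^mpt\,m^2}{4(2m+1)}$, respectively.

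Finally I would add the three contributions over the common denominator $8m+4=4(2m+1)$; the numerator combines as $-4m(m+1)+(m+1)^2+m^2=1-2m^2-2m$, producing the asserted value $\f{pt(-1)^m(1-2m^2-2m)}{8m+4}$. When $m=0$ the first and third pairs drop out and only the middle contribution $\f{pt}{4}$ remains, which agrees with this formula. The main obstacle, relative to Lemma \ref{sigma1_2}, is exactly that the pole now sits at $n=p-2$ instead of $n=p-1$: this forces three pairs $(k,l)$ to contribute rather than one, and the delicate point is to evaluate the several boundary binomial coefficients modulo $p$ correctly and to check that their combination collapses to $1-2m^2-2m$.
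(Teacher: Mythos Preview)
Your proposal is correct and follows the same approach as the paper. Both identify that the only surviving contributions come from the pole at $n=p-2$ in the inner sum, which forces the three boundary pairs $(k,l)\in\{(m,p-1-m),(m,p-2-m),(m-1,p-1-m)\}$; both then reduce each of these terms modulo $p$ using Lemma~\ref{xkx} and the standard congruences for $\binom{p-1}{j}$, $\binom{p-2}{j}$, $\binom{p-1-m}{m}$, obtaining the same three contributions whose sum is $\dfrac{pt(-1)^m(1-2m^2-2m)}{8m+4}$. Your write-up is in fact slightly more transparent than the paper's, since you explain \emph{why} only these three pairs survive, whereas the paper simply records the surviving terms.
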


\begin{proof}	
	By Lemma \ref{xkx}, for $m+1\leq k\leq p-m-1$, we have
	\begin{align*}
		&\binom{x}{k}\binom{x+k}{k}\eq \f{pt(-1)^{m+k+1}\binom{m+k}{k}}{(k-m)\binom{k}{m}}\pmod{p^2}.
	\end{align*}
	Hence we get
	\begin{align*}
		&\sum_{k=0}^m\sum_{l=m+1}^{p-m-1}\binom{x}{k}\binom{x+k}{k}\binom{x}{l}\binom{x+l}{l}2^{k+l}\sum_{n=0}^{k+l}\f{p(p+1)}{n+2}\binom{n}{l}\binom{l}{n-k}\binom{p-1}{n}\\
		&\quad\eq -\binom{x}{m}\binom{x+m}{m}\binom{x}{p-2-m}\binom{x+p-2-m}{p-2-m}2^{p-2}\binom{p-2}{m}\\
		&\qquad-\binom{x}{m-1}\binom{x+m-1}{m-1}\binom{x}{p-1-m}\binom{x+p-1-m}{p-1-m}2^{p-2}\binom{p-2}{m-1}\\
		&\qquad+\binom{x}{m}\binom{x+m}{m}\binom{x}{p-1-m}\binom{x+p-1-m}{p-1-m}2^{p-1}\binom{p-2}{m-1}(1+m-p)\\
		&\quad\eq
		-\binom{2m}{m}\f{pt{\binom{p-2}{m}}^22^{p-2}}{(p-2m-2)\binom{p-m-2}{m}}+m\binom{2m-1}{m-1}\f{pt\binom{p-1}{m}\binom{p-2}{m-1}2^{p-2}}{(p-2m-1)\binom{p-m-1}{m}}\\
		&\qquad-(1+m)\binom{2m}{m}\f{pt\binom{p-1}{m}\binom{p-2}{m-1}2^{p-1}}{(p-2m-1)\binom{p-m-1}{m}}\\
		&\quad\eq
		\f{pt(-1)^m(m+1)^2}{4(2m+1)}+\f{pt(-1)^mm^2}{4(2m+1)}+\f{pt(-1)^{m-1}(m+1)m}{2m+1}\\
		&\quad
		=\f{pt(-1)^m(1-2m^2-2m)}{8m+4}\pmod{p^2},
	\end{align*}
as desired.
\end{proof}

\begin{proof}[Proof of Theorem \ref{mainth2}] We divide the proof into three cases.
	
	{\it Case 1}. $m<(p-1)/2$.
	
 For nonnegative integers $k$ and $l$, we have 
\begin{align*}
	\sum_{n=0}^{p-1}(n+1)\binom{n}{k}\binom{n}{l}&=\sum_{i=0}^{l}\binom{i+k}{l}\binom{l}{i}\sum_{n=0}^{p-1}(n+1)\binom{n}{i+k}\\
	&=\sum_{i=0}^{l}\binom{i+k}{l}\binom{l}{i}(i+k+1)\sum_{n=0}^{p}\binom{n}{i+k+1}\\
	&=\sum_{i=0}^{l}\binom{i+k}{l}\binom{l}{i}(i+k+1)\binom{p+1}{i+k+2}\\
    &=\sum_{n=0}^{k+l}\dfrac{p(p+1)}{n+2}\binom{n}{l}\binom{l}{n-k}\binom{p-1}{n}.
\end{align*}
	Therefore,
	\begin{align*}
		\sum_{n=0}^{p-1}(n+1)t_n(x)^2&=\sum_{n=0}^{p-1}\sum_{l=0}^n\sum_{k=0}^n\binom{n}{k}\binom{n}{l}\binom{x}{k}\binom{x+k}{k}\binom{x}{l}\binom{x+l}{l}2^{k+l}\\
		&=\sum_{k=0}^{p-1}\sum_{l=0}^{p-1}\binom{x}{k}\binom{x+k}{k}\binom{x}{l}\binom{x+l}{l}2^{k+l}\sum_{n=0}^{p-1}(n+1)\binom{n}{k}\binom{n}{l}\\
		&=\sum_{k=0}^{p-1}\sum_{l=0}^{p-1}\binom{x}{k}\binom{x+k}{k}\binom{x}{l}\binom{x+l}{l}2^{k+l}\sum_{n=0}^{k+l}\dfrac{p(p+1)}{n+2}\binom{n}{l}\binom{l}{n-k}\binom{p-1}{n}\\
		&=\sum_{s=1}^9\tau_s,
	\end{align*}
	where
	\begin{gather*}
		\tau_1=\sum_{k=0}^m\sum_{l=0}^mg(k,l),\quad \tau_2=\sum_{k=0}^m\sum_{l=m+1}^{p-1-m}g(k,l),\quad \tau_3=\sum_{k=0}^m\sum_{l=p-m}^{p-1}g(k,l),\\
		\tau_4=\sum_{k=m+1}^{p-1-m}\sum_{l=0}^mg(k,l),\quad \tau_5=\sum_{k=m+1}^{p-1-m}\sum_{l=m+1}^{p-1-m}g(k,l),\quad \tau_6=\sum_{k=m+1}^{p-1-m}\sum_{l=p-m}^{p-1}g(k,l),\\
		\tau_7=\sum_{k=p-m}^{p-1}\sum_{l=0}^mg(k,l),\quad \tau_8=\sum_{k=p-m}^{p-1}\sum_{l=m+1}^{p-1-m}g(k,l),\quad \tau_9=\sum_{k=p-m}^{p-1}\sum_{l=p-m}^{p-1}g(k,l),
	\end{gather*}
	and $g(k,l)\ (0\leq k,l\leq p-1)$ stand for the summands
	$$
	\binom{x}{k}\binom{x+k}{k}\binom{x}{l}\binom{x+l}{l}2^{k+l}\sum_{n=0}^{k+l}\f{p(p+1)}{n+2}\binom{n}{l}\binom{l}{n-k}\binom{p-1}{n}.
	$$
	By the symmetry of $k$ and $l$, we have $\tau_2=\tau_4$. Meanwhile, in view of Lemma \ref{xkx}, we immediately obtain $\tau_3\eq\tau_5\eq\tau_6\eq\tau_7\eq\tau_8\eq\tau_9\eq0\pmod{p^2}$. Therefore
	\begin{equation*}
		\sum_{n=0}^{p-1}(n+1)t_n(x)^2\eq \tau_1+2\tau_2\pmod{p^2}.
	\end{equation*}
	Then, by Lemmas \ref{sigma2_1} and \ref{sigma2_2} we obtain
	\begin{align*}
	\sum_{k=0}^{p-1}(n+1)t_n(x)^2&\equiv \f{p}{4}-\f{(-1)^m(2m^2+2m-1)(2t+1)p}{8m+4}\\
&\equiv \f{p}{4}-\f{(-1)^{\<x\>_p}(2x^2+2x-1)(p+2(x-\<x\>_p))}{8x+4}\pmod{p^2}.
	\end{align*}
    This concludes the proof of Theorem \ref{mainth2} in Case 1.
	
	\medskip
	
	{\it Case 2}. $m>(p-1)/2$.
	
	Clearly, $t_n(x)=t_n(-1-x)$ for each nonnegative integer $n$. So
	$$
	\sum_{n=0}^{p-1}(n+1)t_n(x)^2=\sum_{n=0}^{p-1}(n+1)t_n(-1-x)^2.
	$$
	Since $\<-1-x\>_{p}=p-1-m<(p-1)/2$, by Theorem \ref{mainth2} in Case 1 we have
	\begin{align*}
		&\sum_{n=0}^{p-1}(n+1)t_n(x)^2\\
		&\quad\eq \f{p}{4}-\f{(-1)^{\<-1-x\>_p}(2(-1-x)^2+2(-1-x)-1)(p+2(-1-x-\<-1-x\>_p))}{8(-1-x)+4}\\
&\quad=\f{p}{4}-\f{(-1)^{\<x\>_p}(2x^2+2x-1)(p+2(x-\<x\>_p))}{8x+4}\pmod{p^2}.
	\end{align*}
	This proves Theorem \ref{mainth2} in Case 2.
	
	\medskip
	
	{\it Case 3}. $m=(p-1)/2$.
	
	In this case, $m+1>p-1-m$. So we have
	\begin{equation}\label{c3_1}
		\sum_{n=0}^{p-1}(n+1)t_n(x)^2\eq \tau_1\pmod{p^2},
	\end{equation}
	where $\tau_1$ is defined as in Case 1.
	
	In view of Lemmas \ref{xkx} and \ref{identity4}, we have
	\begin{align}\label{c3_2}
		&\tau_1=\sum_{k=0}^m\sum_{l=0}^m\binom{x}{k}\binom{x+k}{k}\binom{x}{l}\binom{x+l}{l}2^{k+l}\sum_{n=0}^{k+l}\f{p(p+1)}{n+2}\binom{n}{l}\binom{l}{n-k}\binom{p-1}{n}\notag\\
		&\quad\eq
		p\sum_{k=0}^{m}\sum_{l=0}^{m}\binom{m}{k}\binom{m+k}{k}\binom{m}{l}\binom{m+l}{l}\dfrac{(-2)^{k+l}}{\binom{k+l+2}{k+1}}\notag\\
&\quad=\f{p}{4}-\f{(-1)^{m}(2m^2+2m-1)p}{8m+4}\notag\\
&\quad\eq \f{p}{4}+\f{3}{8}\l(\f{-1}{p}\r)\pmod{p^2}.
	\end{align}
    Combining \eqref{c3_1} and \eqref{c3_2}, we prove Theorem \ref{mainth2} in this case.

	The proof of Theorem \ref{mainth2} is now complete.
\end{proof}

\medskip

\noindent{\it Proof of Corollary \ref{sunconm}}. By Theorems \ref{mainth1} and \ref{mainth2} in the case $x=-1/2$, we immediately obtain \eqref{suncon1}.

Note that
\begin{align*}
\l\<-\f14\r\>_p&=\begin{cases}(p-1)/4,\quad &\t{if}\ p\eq1\pmod4,\\ (3p-1)/4,\quad &\t{if}\ p\eq3\pmod4,\end{cases}\\
\l\<-\f13\r\>_p&=\begin{cases}(p-1)/3,\quad &\t{if}\ p\eq1\pmod3,\\ (2p-1)/3,\quad &\t{if}\ p\eq2\pmod3,\end{cases}\\
\l\<-\f16\r\>_p&=\begin{cases}(p-1)/6,\quad &\t{if}\ p\eq1\pmod6,\\ (5p-1)/6,\quad &\t{if}\ p\eq5\pmod6.\end{cases}
\end{align*}
Then it is routine to verify \eqref{suncon2}--\eqref{suncon4} via combining the above and Theorems \ref{mainth1} and \ref{mainth2}.\qed

\begin{Ack}
This work is supported by the National Natural Science Foundation of China (grant 12201301).
\end{Ack}


\begin{thebibliography}{99}
\bibitem{AAR99} G.E. Andrews, R. Askey and R. Roy, Special Functions, Encyclopedia of Mathematics and its Applications 71, Cambridge University Press, Cambridge, 1999.

\bibitem{Guo} V.J.W. Guo, Proof of Sun's conjectures on integer-valued polynomials, J. Math. Anal. Appl. 444 (2016), 182--191.

\bibitem{KW} K. Kimoto and M. Wakayama, Ap\'ery-like numbers arising from special values of spectral zeta function for non-commutative harmonic oscillators, Kyushu J. Math. 60 (2006), 383--404.

\bibitem{Liu2017} J.-C. Liu, Proof of some divisibility results on sums involving binomial coefficients, J. Number Theory 180 (2017), 566--572.

\bibitem{Liu2018} J.-C. Liu, A generalized supercongruence of Kimoto and Wakayama, J. Math. Anal. Appl. 467 (2018), 15--25.

\bibitem{LOS} L. Long, R. Osburn and H. Swisher, On a conjecture of Kimoto and Wakayama, Proc. Amer. Math. Soc. 144 (2016), 4319--4327.

\bibitem{SunZH2022} Z.-H. Sun, Congruences for certain families of Ap\'ery-like sequences, Czech. Math. J. 72 (2022), 875--912.

\bibitem{SunZW2013} Z.-W. Sun, Supercongruences involving products of two binomial coefficients, Finite Fields Appl. 22 (2013), 24--44.

\bibitem{SunZW2017} Z.-W. Sun, Supercongruences involving dual sequences, Finite Fields Appl. 46 (2017), 179--216.

\bibitem{Wang2025} C. Wang and S.-J. Wang, On a conjectural supercongruence involving the dual sequence $s_n(x)$, preprint (2025), arXiv:2506.18287.

\bibitem{WangZhong} R.-H. Wang and M.X.X. Zhong, Congruences related to dual sequences and Catalan numbers, European J. Combin. 101 (2022), 103458.
\end{thebibliography}
\end{document}